\newtheorem{prop}{Proposition}
\newtheorem{cor}{Corollary}
\newtheorem{thm}{Theorem}
\newtheorem{rmk}{Remark}
\title{Constructing 5-chromatic unit distance graphs \\embedded in the Euclidean plane \\and two-dimensional spheres\footnote{Published in Discrete Mathematics,  https://doi.org/10.1016/j.disc.2022.113106}}
\author{V.A. Voronov\footnote{PhD, Researcher, Caucasus Mathematical Center of Adyghe State University, Maikop; Moscow Institute for Physics and Technologies, Moscow, Russian Federation}, A.M. Neopryatnaya\footnote{Undergraduate student, Adyghe State University, Maikop, Russian Federation}, E.A. Dergachev\footnote{Graduate student, Adyghe State University, Maikop, Russian Federation}\\ \vspace{5mm} {\small\textit{v-vor@yandex.ru}}}
\begin{document}

\maketitle

\definecolor{wrwrwr}{rgb}{0.1,0.1,0.1}
\definecolor{qqwuqq}{rgb}{0,0.39216,0}
\definecolor{wrwrwr}{rgb}{0.38039,0.38039,0.38039}
\definecolor{wqwqwq}{rgb}{0.37647,0.37647,0.37647}
\definecolor{qqwuqq}{rgb}{0,0.39,0}
\definecolor{uququq}{rgb}{0.25,0.25,0.25}
\definecolor{wwwwww}{rgb}{0.4,0.4,0.4}

\begin{abstract}
    This paper is devoted to the development of algorithms for finding unit distance graphs with chromatic number greater than 4, embedded in a two-dimensional sphere or plane. Such graphs provide a lower bound for the Hadwiger--Nelson problem  on the chromatic number of the plane and its generalizations to the case of the sphere.
    A series of 5-chromatic unit distance graphs on 64513 vertices embedded into the plane is constructed. Unlike previously known examples, these graphs do not use the Moser spindle as the base element.
   The construction of 5-chromatic graphs embedded in a sphere at two values of the radius is given. Namely, the 5-chromatic unit distance graph on 372 vertices embedded into the circumsphere of an icosahedron with a unit edge length, and the 5-chromatic graph on 972 vertices embedded into the circumsphere of a great icosahedron are constructed.
\end{abstract}

%\begin{keyword}
%Hadwiger--Nelson problem \sep  distance graphs \sep SAT approach
%\end{keyword}

\section{Introduction}

 This paper is devoted to one of the questions related to the Hadwiger--Nelson problem on the chromatic number of the plane \cite{Hadwiger,Soifer}, namely the development of algorithms that allow constructing new examples of 5-chromatic unit distance graphs embedded in a plane or a sphere, based on a given 4-chromatic unit distance subgraph.
 
 The chromatic number of a subset of the Euclidean space $X\subseteq\mathbb{R}^n$  is the minimum number of colors needed to color $X$, so that the points $u$, $v$ at a unit distance apart are colored differently, i.e.
 \[
 \chi(X) = \min \{ \chi \; : \; X = X_1 \sqcup X_2 \sqcup \dots \sqcup X_\chi \quad \forall x,y \in X_i \quad  \|x-y| \neq 1 \}.
 \]
 
 P.~Erdős posed a number of problems on chromatic numbers of higher dimensional spaces and on the asymptotics of chromatic numbers of $\mathbb{R}^n$ and $S^n(r)$, $r>1/2$ as $n \rightarrow \infty$. These problems offer opportunities for more complicated and meaningful mathematical tools, and many more papers are devoted to them compared to the two-dimensional case.
 
 Let us mention some results for higher dimensions. Currently, the best lower estimate for the three-dimensional space $$\chi(\mathbb{R}^3)\geq 6$$ belongs to O.~Nechushtan \cite{Nechushtan}. New lower estimates for $\chi(\mathbb{R}^n)$, $9\leq n\leq 12   $ were obtained by Cherkashin et al. in \cite{Rai3}.

 D.~Larman and C.~Rogers \cite{Larman} proved that
 \[
\chi(\mathbb{R})^n \leq (3 +o(1))^n.
\]

 A.~M.~Raigorodskii showed in \cite{Rai1} that
 \[
\chi(\mathbb{R})^n \geq (1.239... +o(1))^n.
\]

In \cite{Lovasz} L.~Lovász showed that for $n \geq 3$ there exists an infinite monotone decreasing sequence of radii \[\{r^{(n)}_i\}, \quad \lim_{i \to \infty} r^{(n)}_i = \frac{1}{2}\] such that
\[
\chi(S^{n-1}(r^{(n)}_i)) = n+1.
\]
Lovász's paper gives an explicit construction of \emph{strongly self-dual polytopes} whose graphs of diameters provide the above estimate.    Since  $S^{n-1}(r_0) \subset S^{n}( r)$ when $r \geq r_0$, the consequence is the estimate
\[
\chi(S^{n-1}(r)) \geq n, \quad r > \frac{1}{2}.
\]

In \cite{Rai2} Raigorodskii proved  that the chromatic number of a $n$-dimensional sphere of radius $r$ grows exponentially for any $r>\frac{1}{2}$, correcting the statement mentioned in \cite{Lovasz}.
 
 Recently, O.~A.~Kostina has improved estimates for the asymptotics of $\chi(S^{n-1}(r))$ at $r>1/2$ \cite{Kostina}. 
 
 R.~Prosanov obtained new upper estimates for the chromatic numbers of spheres $S^{n-1}(r)$ and balls $B^n(r)$ \cite{Prosanov}.

  According to the de Bruijn--Erdős theorem \cite{Erdos}, for any $X\subset\mathbb{R}^n$ there exists a finite set of points $V \subseteq X$ for which $\chi(X)=\chi(V)$. Therefore the direct way to obtain lower estimates of $\chi(X)$ is to construct a finite graph with the maximal possible chromatic number whose vertices are points of $X$ and vertices that are at distance 1 apart are connected. Such graphs are called unit distance graphs. 

\begin{figure}
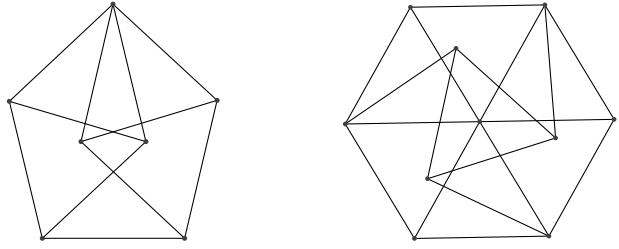

    \centering
\begin{tabular}{p{4cm} p{4cm} } 

\includegraphics[width=2.8cm]{moser} &\includegraphics[width=3.6cm]{golomb}
 \\
%\hline
\end{tabular}    
    \caption{The Moser spindle and the Golomb graph}
    \label{moser_golomb}
\end{figure}
    For the plane, the  lower estimate of $\chi(\mathbb{R}^2)\geq 4$ associated with the Moser spindle, a 4-chromatic graph on 7 vertices (Fig. \ref{moser_golomb}), has long been the best. 
    The first example of a 5-chromatic unit distance graph constructed by A.~de~Grey in 2018 contained 1581 vertices and 7877 edges \cite{deGrey}. A little later, G. Exoo and D. Ismailescu published a paper with a significantly different construction of the 5-chromatic graph~\cite{Exoo2}. M.~Heule \cite{Heule1} modified de Grey's construction and used  massive parallel computations to minimize the number of vertices in the graph. He managed to reduce the number of vertices to 553, then to 517. Then J. Parts, using a different vertex selection algorithm, found a 5-chromatic distance graph with 509 vertices, which is by far the minimal number of vertices in the case of the plane \cite{Parts1}. In addition, he was able to propose a proof of the estimate $\chi(\mathbb{R}^2)\geq 5$ in which the volume of enumeration is available for direct human verification \cite{Parts2}. One can say that the graph has been constructed for which, when checking the absence of 4-coloring, the depth-first search  passes much less nodes than for the 5-chromatic graphs considered earlier.

The de Grey construction, its modification proposed by M.~Heule, and the graph by J.~Parts are based on a graph containing a large number of copies of the Moser spindle as subgraphs. The construction of G.~Exoo and D.~Ismailescu was more complicated, but the resulting graph also contained the Moser spindle, the simplest 4-chromatic unit distance graph. Hence a natural question arises. 

Can a 5-chromatic distance graph be constructed using another 4-chro\-ma\-tic distance graph with a small number of vertices instead of the Moser spindle? The answer is positive, and the present paper gives such a construction. 

In addition, constructions based on other small 4-chromatic distance graphs allowed us to find 5-chromatic graphs embedded in the sphere $S^2(r)$ at two values of the sphere radius. Apparently, such constructions have not been found before in the published works.

The previously known lower and upper estimates for the chromatic number of the two-dimensional sphere are given in Table \ref{sph_estimates}. Most of these results belong to G.~Simmons~\cite{Simmons}. Recently, some results have been obtained on sphere colorings in which connected regions painted in the same color are bounded by arcs of circles. P. Ágoston proved that if $r \geq 18$ then 7 colors are not enough to obtain a map-type coloring based on the coloring of a square of a dual triangulation~\cite{Agoston}. T. Sirgedas showed that 7-coloring exists if a radius is large enough \cite{Sirgedas}.

\begin{table}[ht]
    \centering
    \begin{tabular}{|c|c|c|}
    \hline
        $r$ & Estimate for $\chi(r)=\chi(S^2(r))$ & Source  \\
        \hline
        $r<1/2$ & $\chi(r)=1$ &   \\
         \hline
        $r = 1/2$ & $\chi(r)=2$ & \\
         \hline
        $r >1/2$ & $\chi(r)\geq 3$ & \cite{Simmons} \\
         \hline
        $r = \sqrt{2}/2$ & $\chi(r)=4$ & \cite{Simmons,Godsil} \\
         \hline
        $r\geq \sqrt{3}/3$ & $\chi(r)\geq 4$ & \cite{Simmons} \\
         \hline
        $ r>1/2$ & $\chi(r)\leq 15$ & \cite{Coulson} \\
         \hline
        $ r \geq 12.44$ & $\chi(r) \leq 7$ & \cite{Sirgedas} \\
         \hline
    \end{tabular}
    \caption{Lower and upper estimates for $\chi(S^2(r))$.}
    \label{sph_estimates}
\end{table}

\section{Small minimally rigid unit distance graphs}

Let $V \subset \mathbb{R}^n$ be an arbitrary set of points. Denote as 
\[
G=(V,E)=\operatorname{udg}(V)
\]
a (faithful) unit distance graph \cite{Alon} with vertices from $V$,
for which
\begin{equation*}
    E = \{ (u,v): u,v \in V, |u-v| = 1 \}.
\end{equation*}

Following Graver et al., we will call a unit distance graph \emph{rigid} if there is no continuous motion of its vertices for which the lengths of all the edges are preserved and at least one of the pairwise distances between vertices changes \cite{combrig}. Note that the 4-chromatic graph $L_{17}$ on 17 vertices  with girth 4, constructed in \cite{Exoo2} is rigid.  In addition, the Moser spindle and $L_{17}$ are \emph{minimally rigid} \cite{Laman,Henneberg}, i.e., they lose the rigidity property after removing any edge. Moreover, the chromatic number of these graphs decreases after removing any vertex (i.e. they are 4-vertex-critical). The Golomb graph (Fig. \ref{moser_golomb}) also contains a 4-chromatic minimally rigid subgraph. Hence an empirical idea arises, which guided us in further work.

\medskip
\textbf{Heuristic}. Below we will consider minimally rigid 4-vertex-critical graphs with a small number of vertices  as a basic element for constructing a new example of a 5-chromatic graph.

\medskip
The number of abstract graphs with these properties is given in Table \ref{laman}. Here $l_{4,k}$ is the number of minimally rigid 4-chromatic graphs on $k$ vertices~\cite{oeis2} and $l'_{4,k}$ is the number of minimally rigid 4-vertex-critical graphs.

\begin{table}[ht]
\begin{center}
\begin{tabular}{ |c|c|c|c|c|c|c|c|c|c|c| } 
\hline
$k$ \, &        7   & 8   & 9 & 10 & 11 \\
\hline 
$l_{4,k}$  & 1   & 8    & 102 & 1601 & 29811 \\

\hline 
$l'_{4,k}$  & 1   & 1  & 6 & 60 & 241  \\

\hline 
\end{tabular}
\caption{Number of unlabeled 4-chromatic minimally rigid graphs}
\end{center}
\label{laman}
\end{table}

A complete enumeration of the distance embeddings of these graphs up to $n=10$ is not a difficult task, but in this paper we will limit ourselves to considering only a few of them (Fig.~\ref{smallgraphs}).

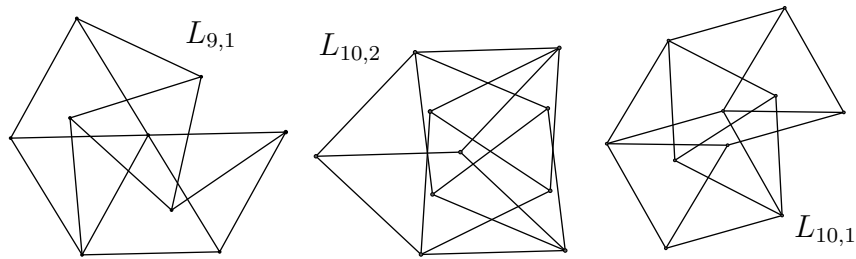
\begin{figure}[ht]
\begin{center}
\begin{tabular}{ p{3.5cm} p{3.5cm} p{3.5cm} } 
\begin{tikzpicture}[line cap=round,line join=round,>=triangle 45,x=1cm,y=1cm, scale=0.5]
%\clip(-8.32,-5.46) rectangle (9.68,5.46);
\draw (1.0,4.0) node[anchor=north west]  {$L_{9,1}$};
\draw [line width=0.5pt] (-3.3,0.6)-- (-1.5592820323027552,3.775011961699667);
\draw [line width=0.5pt] (-1.5592820323027552,3.775011961699667)-- (0.32,0.68);
\draw [line width=0.5pt] (0.32,0.68)-- (-3.3,0.6);
\draw [line width=0.5pt] (-3.3,0.6)-- (-1.4207179676972448,-2.4950119616996673);
\draw [line width=0.5pt] (-1.4207179676972448,-2.4950119616996673)-- (0.32,0.68);
\draw [line width=0.5pt] (-1.4207179676972448,-2.4950119616996673)-- (2.1992820323027553,-2.4150119616996673);
\draw [line width=0.5pt] (2.1992820323027553,-2.4150119616996673)-- (0.32,0.68);
\draw [line width=0.5pt] (0.32,0.68)-- (3.94,0.76);
\draw [line width=0.5pt] (3.94,0.76)-- (2.1992820323027553,-2.4150119616996673);
\draw [line width=0.5pt] (-1.7378163968074531,1.1353767695399808)-- (1.7143628565417033,2.2277404465018504);
\draw [line width=0.5pt] (1.7143628565417033,2.2277404465018504)-- (0.9342879242874824,-1.3081163237970497);
\draw [line width=0.5pt] (0.9342879242874824,-1.3081163237970497)-- (-1.7378163968074531,1.1353767695399808);
\draw [line width=0.5pt] (-1.5592820323027552,3.775011961699667)-- (1.7143628565417033,2.2277404465018504);
\draw [line width=0.5pt] (0.9342879242874824,-1.3081163237970497)-- (3.94,0.76);
\draw [line width=0.5pt] (-1.7378163968074531,1.1353767695399808)-- (-1.4207179676972448,-2.4950119616996673);
\begin{scriptsize}
\draw [fill=black] (-3.3,0.6) circle (1pt);
\draw [fill=black] (0.32,0.68) circle (1pt);
\draw [fill=black] (-1.4207179676972448,-2.4950119616996673) circle (1pt);
\draw [fill=black] (2.1992820323027553,-2.4150119616996673) circle (1pt);
\draw [fill=black] (-1.5592820323027552,3.775011961699667) circle (1pt) ;
\draw [fill=black] (3.94,0.76) circle (1.3pt);
\draw [fill=black] (1.7143628565417033,2.2277404465018504) circle (1pt);
\draw [fill=black] (-1.7378163968074531,1.1353767695399808) circle (1pt);
\draw [fill=black] (0.9342879242874824,-1.3081163237970497) circle (1pt) ;
\end{scriptsize}
\end{tikzpicture} & \begin{tikzpicture}[line cap=round,line join=round,>=triangle 45,x=1cm,y=1cm, scale=0.65]
%\clip(-5,-4.04) rectangle (2.04,2.88);
\draw(-4.5,2.0)  node[anchor=north west] {$L_{10,2}$};
\draw [line width=0.5pt] (0.4166303387716028,-1.4415217867749939)-- (-2.2,-2.74);
\draw [line width=0.5pt] (-2.2,-2.74)-- (-2.016199949527681,0.17530745230179287);
\draw [line width=0.5pt] (-2.016199949527681,0.17530745230179287)-- (0.4166303387716028,-1.4415217867749939);
\draw [line width=0.5pt] (0.4166303387716028,-1.4415217867749939)-- (0.6004303892439218,1.473785665526799);
\draw [line width=0.5pt] (0.6004303892439218,1.473785665526799)-- (-2.016199949527681,0.17530745230179287);
\draw [line width=0.5pt] (0.36799789118532045,0.23980939294120096)-- (-1.9673096548268512,-1.5149380719479018);
\draw [line width=0.5pt] (-1.9673096548268512,-1.5149380719479018)-- (-2.3193117636415304,1.3848713209932992);
\draw [line width=0.5pt] (-2.3193117636415304,1.3848713209932992)-- (0.36799789118532045,0.23980939294120096);
\draw [line width=0.5pt] (0.36799789118532045,0.23980939294120096)-- (0.72,-2.66);
\draw [line width=0.5pt] (0.72,-2.66)-- (-1.9673096548268512,-1.5149380719479018);
\draw [line width=0.5pt] (-2.3193117636415304,1.3848713209932992)-- (-4.3265439087208994,-0.7373490058719567);
\draw [line width=0.5pt] (-4.3265439087208994,-0.7373490058719567)-- (-2.2,-2.74);
\draw [line width=0.5pt] (0.6004303892439218,1.473785665526799)-- (-1.4022156496253484,-0.6527629097475159);
\draw [line width=0.5pt] (-1.4022156496253484,-0.6527629097475159)-- (0.72,-2.66);
\draw [line width=0.5pt] (-1.4022156496253484,-0.6527629097475159)-- (-4.3265439087208994,-0.7373490058719567);
\draw [line width=0.5pt] (-2.2,-2.74)-- (0.72,-2.66);
\draw [line width=0.5pt] (-2.3193117636415304,1.3848713209932992)-- (0.6004303892439218,1.473785665526799);
\begin{scriptsize}
\draw [fill=wrwrwr] (-2.2,-2.74) circle (1pt);
\draw [fill=wrwrwr] (0.72,-2.66) circle (1pt);
\draw [fill=wrwrwr] (0.4166303387716028,-1.4415217867749939) circle (1pt);
\draw [fill=wrwrwr] (-2.016199949527681,0.17530745230179287) circle (1pt);
\draw [fill=wrwrwr] (0.6004303892439218,1.473785665526799) circle (1pt);
\draw [fill=wrwrwr] (-2.3193117636415304,1.3848713209932992) circle (1pt);
\draw [fill=wrwrwr] (-1.9673096548268512,-1.5149380719479018) circle (1pt);
\draw [fill=wrwrwr] (0.36799789118532045,0.23980939294120096) circle (1pt);
\draw [fill=wrwrwr] (-2.016199949527681,0.17530745230179287) circle (1pt);
\draw [fill=wrwrwr] (-1.4022156496253484,-0.6527629097475159) circle (1pt);
\draw [fill=wrwrwr] (-4.3265439087208994,-0.7373490058719567) circle (1pt);
\end{scriptsize}
\end{tikzpicture} & \begin{tikzpicture}[line cap=round,line join=round,>=triangle 45,x=1cm,y=1cm, scale=0.5]
%\clip(-9,-7.58) rectangle (8.04,3.34);
\draw (0.5,-5.3)  node[anchor=north west] {$L_{10,1}$};
\draw [line width=0.5pt] (-4.14,-3.68)-- (-0.96,-3.72);
\draw [line width=0.5pt] (-4.14,-3.68)-- (-1.0808035764605388,-2.810910106949017);
\draw [line width=0.5pt] (-1.0808035764605388,-2.810910106949017)-- (2.0991964235394605,-2.850910106949017);
\draw [line width=0.5pt] (2.0991964235394605,-2.850910106949017)-- (-0.96,-3.72);
\draw [line width=0.5pt] (-1.0808035764605388,-2.810910106949017)-- (0.5438374396908383,-0.07694932291450265);
\draw [line width=0.5pt] (0.5438374396908383,-0.07694932291450265)-- (2.0991964235394605,-2.850910106949017);
\draw [line width=0.5pt] (0.5438374396908383,-0.07694932291450265)-- (-2.515358983848622,-0.9460392159654859);
\draw [line width=0.5pt] (-2.515358983848622,-0.9460392159654859)-- (-4.14,-3.68);
\draw [line width=0.5pt] (-4.14,-3.68)-- (-2.584641016151377,-6.453960784034515);
\draw [line width=0.5pt] (-0.96,-3.72)-- (-2.584641016151377,-6.453960784034515);
\draw [line width=0.5pt] (0.4745554073880833,-5.584870890983531)-- (-1.0808035764605388,-2.810910106949017);
\draw [line width=0.5pt] (0.4745554073880833,-5.584870890983531)-- (-2.584641016151377,-6.453960784034515);
\draw [line width=0.5pt] (0.3084436564034514,-2.408960488250983)-- (-2.3492472328639904,-4.121949618698035);
\draw [line width=0.5pt] (-2.3492472328639904,-4.121949618698035)-- (-2.515358983848622,-0.9460392159654859);
\draw [line width=0.5pt] (-2.515358983848622,-0.9460392159654859)-- (0.3084436564034514,-2.408960488250983);
\draw [line width=0.5pt] (0.3084436564034514,-2.408960488250983)-- (0.4745554073880833,-5.584870890983531);
\draw [line width=0.5pt] (0.4745554073880833,-5.584870890983531)-- (-2.3492472328639904,-4.121949618698035);
\begin{scriptsize}
\draw [fill=wrwrwr] (-4.14,-3.68) circle (1pt);
\draw [fill=wrwrwr] (-0.96,-3.72) circle (1pt);
\draw [fill=wrwrwr] (-1.0808035764605388,-2.810910106949017) circle (1pt);
\draw [fill=wrwrwr] (2.0991964235394605,-2.850910106949017) circle (1pt);
\draw [fill=wrwrwr] (0.5438374396908383,-0.07694932291450265) circle (1pt);
\draw [fill=wrwrwr] (-2.515358983848622,-0.9460392159654859) circle (1pt);
\draw [fill=wrwrwr] (-2.584641016151377,-6.453960784034515) circle (1pt);
\draw [fill=wrwrwr] (0.4745554073880833,-5.584870890983531) circle (1pt);
\draw [fill=wrwrwr] (0.3084436564034514,-2.408960488250983) circle (1pt);
\draw [fill=wrwrwr] (-2.3492472328639904,-4.121949618698035) circle (1pt);
\end{scriptsize}
\end{tikzpicture}
 \\
%\hline
\end{tabular}
\end{center}
\caption{Minimally rigid 4-chromatic unit distance graphs}
\label{smallgraphs}
\end{figure}

The graph $L_{9,1}$, which is a subgraph of the Golomb graph, will be used in the construction of a 5-chromatic graph on the circumsphere of the icosahedron. The graph $L_{10,1}$ is used in a similar construction for the circumsphere of a great icosahedron. The graph $L_{10,2}$ is an element of the construction of a new example of a 5-chromatic graph embedded in the plane. 

\section{Gluing distance graphs by edges}

Let $u_1,v_1,u_2,v_2 \in S^2(r)$ be such points that $|u_1-v_1|=|u_2-v_2|=1$ and $r > 1/2$. Obviously, there are exactly four isometries of $S^2(r)$ translating the pair $\{u_2, v_2\}$ to $\{u_1, v_1\}$. Denote the set of these isometries by $$P(u_1,v_1; u_2,v_2)\subset O(3).$$

Consider distance graphs $G_1=(V_1,E_1)$ and $G_2 = (V_2, E_2)$; $V_i \subset S^2(r)$, $i=1,2$. 

\begin{figure}[ht]
\begin{center}
\includegraphics[width=12cm]{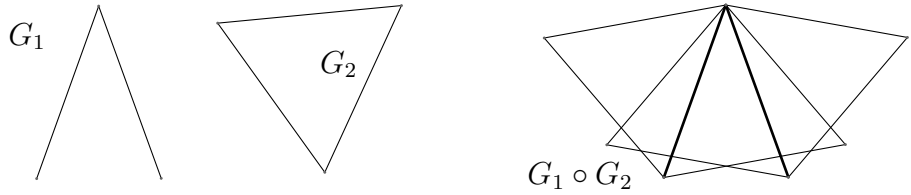}
\end{center}
\caption{Distance product}
\label{distprod}
\end{figure}

Let
\begin{equation*}
\mathcal{F}(G_1, G_2) = \bigcup\limits_{(u_1,v_1) \in E_1, \; (u_2,v_2) \in E_2} P(u_1, v_1; u_2, v_2).
\end{equation*}

Like before, denote the unit distance graph with the set of vertices $V$ by $\operatorname{udg}(V)$, $V \subset S^2(r)$.

Let us call the distance graph
\begin{equation*}
    G_1 \circ G_2 = \operatorname{udg}\left( \bigcup\limits_{f \in \mathcal{F}(G_1,G_2)} f(V_2) \right)
\end{equation*}

by ``distance product'' of distance graphs $G_1$ and $G_2$ (Fig. \ref{distprod}). Note that in the general case $G_1 \circ G_2$ is not isomorphic to $G_2 \circ G_1$ even as an abstract graph. 

Obviously, if the aim is to construct a distance graph with chromatic number $k$, then one can discard all vertices $v\in V(G)$, $\operatorname{deg} v < k-1$, and repeat this operation until either the graph becomes empty, or the degrees of all of the remaining vertices are not less than $k-1$. In some cases to simplify the construction of $k$-chromatic graph one can use a tighter restriction for the degree of vertices.

\section{The case of the sphere}

Consider a sphere $S^2(r_1)$ in which an icosahedron with a unit edge length is inscribed. Its radius is 
$$r_1 = \cos \frac{\pi}{10} = \frac{\sqrt{5+\sqrt{5}}}{2\sqrt{2}} =  0.95105...$$

\begin{figure}[ht]
    \begin{center}
    \begin{tabular}{ p{4.5cm} p{4.5cm} } 
        \includegraphics[width=3.5cm]{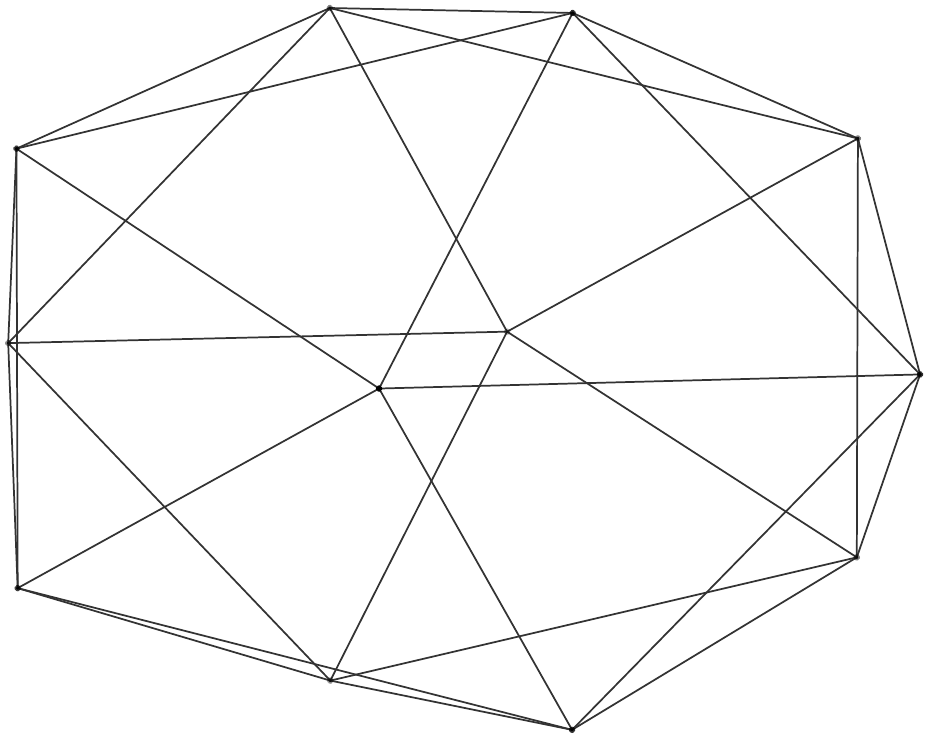} & \includegraphics[width=4cm]{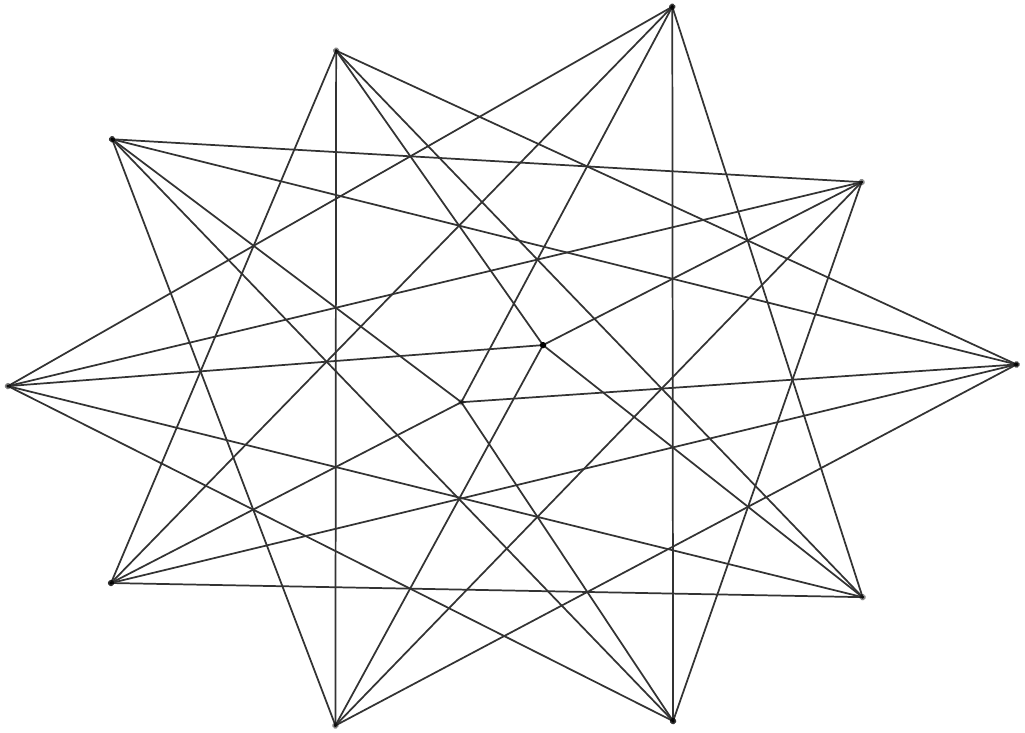}
    \end{tabular}
    \end{center}
    \caption{Distance graphs $H_{12,1}$ and $H_{12,2}$}
    \label{icos}
\end{figure}

Let us denote  the graph of unit distances generated by the vertices of the icosahedron by $H_{12,1}$ (Fig. \ref{icos}, left). Denote by $\alpha(G)$ the independence number (i.e. the cardinality of a largest independent set) of the graph $G$. It is easy to see that

\begin{prop}
$$\alpha(H_{12,1}) = 3;$$
$$\chi(H_{12,1}) = 4.$$
\end{prop}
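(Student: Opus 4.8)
The plan is first to identify $H_{12,1}$ with the $1$-skeleton of the icosahedron as an abstract graph, and then to read off both invariants from its well-understood combinatorial structure. The key preliminary observation is that among the vertices of a unit-edge icosahedron only three distinct pairwise distances occur: the edge length $1$; the length $\varphi=\tfrac{1+\sqrt5}{2}\approx 1.618$ joining two vertices at graph-distance two, which share a common neighbour but are not adjacent (a short diagonal of a vertex link); and the diameter $2r_1\approx 1.902$ joining antipodal vertices. Since $\varphi>1$ and $2r_1>1$, there are no accidental unit distances, so $H_{12,1}=\operatorname{udg}(V)$ is exactly the icosahedral graph: it is $5$-regular and vertex-transitive, and each vertex $v$ has five neighbours, five second-neighbours, and a single antipode $\bar v$.

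For $\alpha(H_{12,1})$ I would bound from both sides. The lower bound $\alpha\ge 3$ is immediate by exhibiting three pairwise nonadjacent vertices, for instance an apex together with two non-consecutive vertices of the opposite pentagon. For the upper bound fix any vertex $v$; by vertex-transitivity it suffices to bound the size of a maximum independent set containing $v$. Its non-neighbours are precisely $\bar v$ and the five neighbours of $\bar v$, and these five vertices form a pentagon, i.e. a $5$-cycle. Now $\bar v$ is adjacent to all five of them, so an independent set containing $v$ either contains $\bar v$ and nothing else (size $2$), or avoids $\bar v$ and picks an independent subset of a $5$-cycle, of size at most $2$. In either case the set has at most $3$ vertices, whence $\alpha(H_{12,1})\le 3$ and therefore $\alpha(H_{12,1})=3$.

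The value $\chi(H_{12,1})=4$ then follows in two steps. The lower bound is a counting argument: in a proper colouring every colour class is independent, hence of size at most $\alpha=3$, so at least $\lceil 12/3\rceil=4$ colours are needed. For the matching upper bound I would exhibit an explicit partition of the twelve vertices into four independent triples — equivalently a proper $4$-colouring — each triple being an independent set of the type (vertex, two non-adjacent second-neighbours) described above, and verify directly that it is proper; its existence shows $\chi(H_{12,1})\le 4$.

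None of the steps is genuinely hard once the first identification is made; the only point demanding care is the initial verification that the three geometric distances are distinct and that exactly one of them equals $1$, since this is what guarantees that $H_{12,1}$ is the icosahedral graph rather than some denser graph carrying extra edges. After that, the $5$-cycle structure of the neighbourhood of $\bar v$ does all the work for $\alpha$, and the explicit four-colouring — which must be written down and checked once — does all the work for $\chi$.
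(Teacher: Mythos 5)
Your proof is correct and fills in, in the standard way, the verification that the paper leaves to the reader as ``easy to see'': the paper's only hint is the remark that every independent triple is an equilateral $\tau$-triangle and that four disjoint such triangles partition the vertex set, which is exactly the content of your bound $\alpha=3$, the counting bound $\chi\geq 12/\alpha=4$, and the explicit $4$-colouring. The one step you defer --- actually writing down the partition into four independent triples --- is routine and is likewise omitted in the paper, so there is no substantive gap.
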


Moreover, the proper 4-coloring of $H_{12,1}$  is unique with respect to the action of the symmetry group of the icosahedron and color permutations \cite{Ballard}. In fact, any triplet of pairwise non-adjacent vertices forms an equilateral triangle with side $\tau = \frac{\sqrt{5}+1}{2}$, and four such triangles without common vertices must lie in planes which form a regular tetrahedron. 

This imposes restrictions on the possible coloring of $S^2(r_1)$ in 4 colors (in particular, the opposite points of $S^2(r_1)$ have different colors).

\begin{prop}
The graph $L_{9,1}$ can be embedded in $S^2(r_1)$ as a unit distance graph.
\end{prop}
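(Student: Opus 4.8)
The plan is to build the realization vertex by vertex, using the fact that $L_{9,1}$ is minimally rigid. Since the isometry group $O(3)$ of $S^2(r_1)$ has the same dimension $3$ as the group of orientation-preserving plane isometries, a generic framework on the sphere obeys the same Laman count as in the plane; hence a minimally rigid graph has only finitely many realizations once one edge is pinned, and the task is to produce one of them. First I would read off the combinatorial structure: $L_{9,1}$ has a single vertex $b$ of degree $5$ whose neighbors $e,a,c,d,f$, together with $b$, carry the four unit triangles $bea$, $bac$, $bcd$, $bdf$ forming a fan, while the remaining three vertices $h,g,i$ form a unit triangle attached to the fan by the three single edges $hc$, $ge$, $if$.

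First I would place the fan. Fixing the edge $ba$ consumes the isometry freedom, and then each further neighbor of $b$ is the intersection of two spherical ``unit circles'' (loci of points at unit chord distance from a fixed point): $e$ and $c$ are the two intersection points of the circles about $b$ and $a$, then $d$ is the second intersection of the circles about $b$ and $c$, and $f$ the second intersection of the circles about $b$ and $d$. A short computation with $r_1=\cos\frac{\pi}{10}$ shows that on the circle of points at unit distance from $b$ the unit chord subtends exactly $72^\circ$, so the five neighbors are forced to be equally spaced; thus $\{b,e,a,c,d,f\}$ is isometric to an icosahedron vertex together with its five neighbors, and I would take these six points to be explicit icosahedron vertices. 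In particular $ef$ is then also a unit distance, so the embedded graph contains $L_{9,1}$ as a (non-induced) subgraph of $\operatorname{udg}$ on these points, which is harmless for the intended construction.

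It remains to position the inner triangle. The vertices $h,g,i$ must form a unit triangle on $S^2(r_1)$ with $h,g,i$ at unit distance from $c,e,f$ respectively, so each of them is a common point of two spherical unit circles. The three points carry six intrinsic parameters and are subject to the six conditions $|hg|=|gi|=|ih|=1$ and $|hc|=|ge|=|if|=1$, so generically only finitely many placements occur, consistent with the Laman count for the whole graph. I would also note that the fixed frame admits the reflection of $S^2(r_1)$ that fixes $b$ and $c$ while interchanging $e\leftrightarrow f$ and $a\leftrightarrow d$, so the candidate inner triangles come in mirror pairs; this organizes the search but does not by itself produce a solution.

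The crux — and the step I expect to be the main obstacle — is to show that this square system actually has a real solution consisting of nine distinct points, i.e. that the successive pairs of spherical unit circles genuinely intersect rather than meeting only over $\mathbb{C}$. I would resolve it constructively: starting from the explicit icosahedral coordinates of $b,a,c,d,e,f$, solve the equations for $h,g,i$ (numerically first, then certifying the coordinates as roots of the governing low-degree polynomials over the field generated by $\cos\frac{\pi}{10}$), and verify directly that all fifteen prescribed distances equal $1$ and that no two of the nine points coincide. Because minimal rigidity reduces the claim to a finite, effectively checkable set of configurations, exhibiting one valid configuration completes the proof.
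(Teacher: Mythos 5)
Your proposal is correct and follows essentially the same route as the paper: six vertices of $L_{9,1}$ (the degree-5 vertex with its fan of four unit triangles) are identified with the pentagonal cap of the inscribed icosahedron, and the remaining three vertices of the attached triangle are obtained by solving the distance equations explicitly, with coordinates certified by their minimal polynomials (the paper's Table \ref{coord1}). You also correctly anticipate the extra unit distance between the two end vertices of the fan, which the paper notes as the dashed non-edge in Fig. \ref{embed}.
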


 Let the vertices $v_1, \dots , v_6$ forming four equilateral triangles with a common vertex coincide with the ``pentagonal cap'' of the icosahedron, and  the icosahedron vertex coordinates are
 \[
 \left(0, \pm\frac{\tau}{2}, \pm\frac{1}{2} \right), \quad \left(\pm\frac{\tau}{2}, \pm\frac{1}{2}, 0 \right), \quad \left(  \pm\frac{1}{2}, 0, \pm\frac{\tau}{2} \right).
 \]
 
 \begin{table}[ht]
    \centering
    \begin{tabular}{|c|c|c|}
    \hline
    & Value & Minimal polynomial for $2 x_i$, $2 y_i$, $2 z_i$,\\
    \hline
    $x_{1}$ & 0.71584   & $-1 - x + 2 x^2 - 2 x^3 + x^4$ \\
    $y_{1}$ & 0.34924   &  $-1 + 2 x - 2 x^2 + x^3 + x^4 $\\
    $z_{1}$ & 0.51972   & $-19 + 20 x^2 + 2 x^4 - 5 x^6 + x^8$\\
    \hline
    $x_{2}$ & -0.07961   & $1 + 8 x + 9 x^2 - 7 x^3 + 28 x^4 + 6 x^5 - 4 x^6 - x^7 + x^8$ \\
    $y_{2}$ & -0.08345   & $-1 - 5 x + 5 x^2 - 4 x^3 + 9 x^4 - 5 x^5 + x^6 - 2 x^7 + x^8$ \\
    $z_{2}$ & 0.94404   & $-1 + 7 x - 13 x^2 - 24 x^3 + 4 x^4 + 10 x^5 - 3 x^7 + x^8$ \\
    \hline
    $x_{3}$ & -0.17646   & $1 + 8 x + 9 x^2 - 7 x^3 + 28 x^4 + 6 x^5 - 4 x^6 - x^7 + x^8$ \\
    $y_{3}$ & 0.79929   & $- 1 - 5 x + 5 x^2 - 4 x^3 + 9 x^4 - 5 x^5 + x^6 - 2 x^7 + x^8$ \\
    $z_{3}$ & 0.48426   & $-1 - 7 x - 13 x^2 + 24 x^3 + 4 x^4 - 10 x^5 + 3 x^7 + x^8$ \\
    \hline
    \end{tabular}
    \caption{Minimal polynomials for vertex coordinates  of $G_{372}$}
    \label{coord1}
\end{table}
 
  Then the coordinates of the other three vertices of $L_{9,1}$ are given in the Table \ref{coord1}. The projection of this embedding is shown in Fig. \ref{embed} (left). Note that when $L_{9,1}$ is embedded in the sphere of radius $r_1$, there appears a unit distance, which is not an edge of $L_{9,1}$ (shown by the dashed line).

\begin{figure}[ht]
    \begin{center}
    \begin{tabular}{ p{4.5cm} p{4.5cm} } 
        \includegraphics[width=3.5cm]{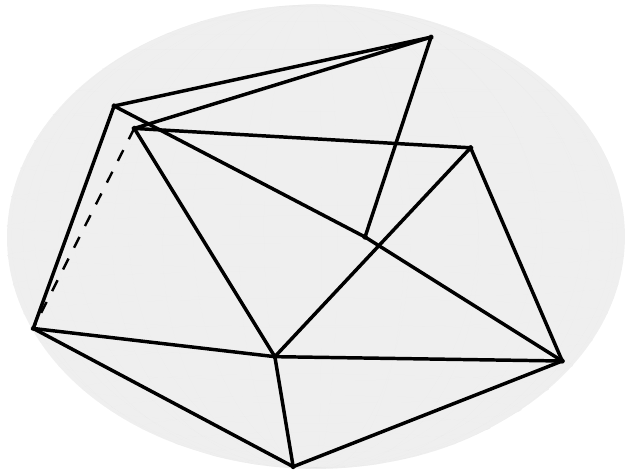} & \includegraphics[width=3.5cm]{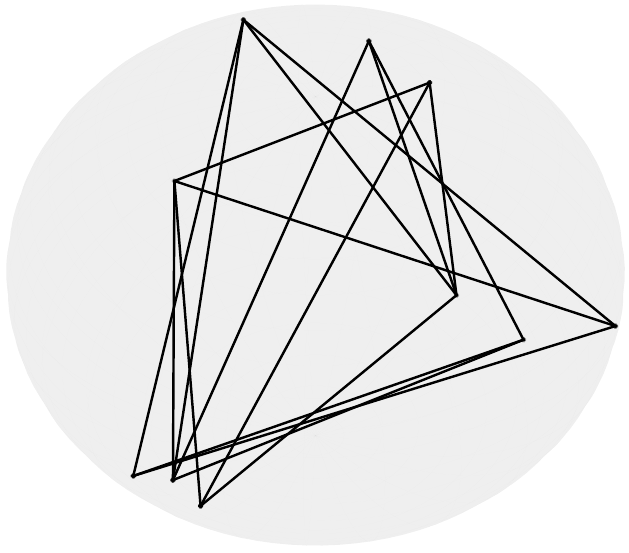}
    \end{tabular}
    \end{center}
    \caption{Embeddings of $L_{9,1}$ and $L_{10,1}$}
    \label{embed}
\end{figure}

\begin{figure}
    \centering
    \includegraphics[width=10cm]{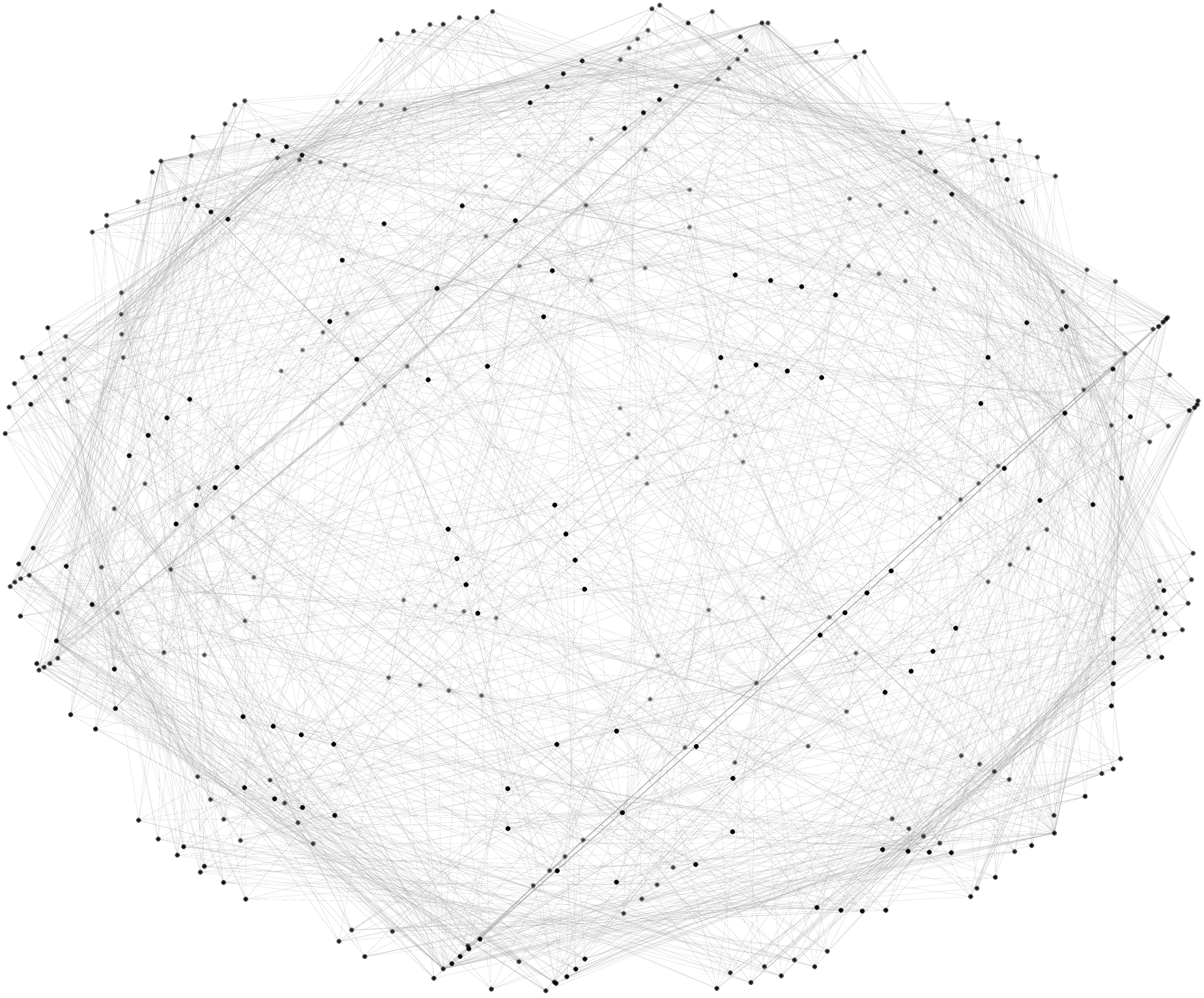}
    \caption{5-chromatic 372-vertex unit distance graph embedded into the circumsphere of the icosahedron}
    \label{g372}
\end{figure}

\begin{figure}
    \centering
    \includegraphics[width=10cm]{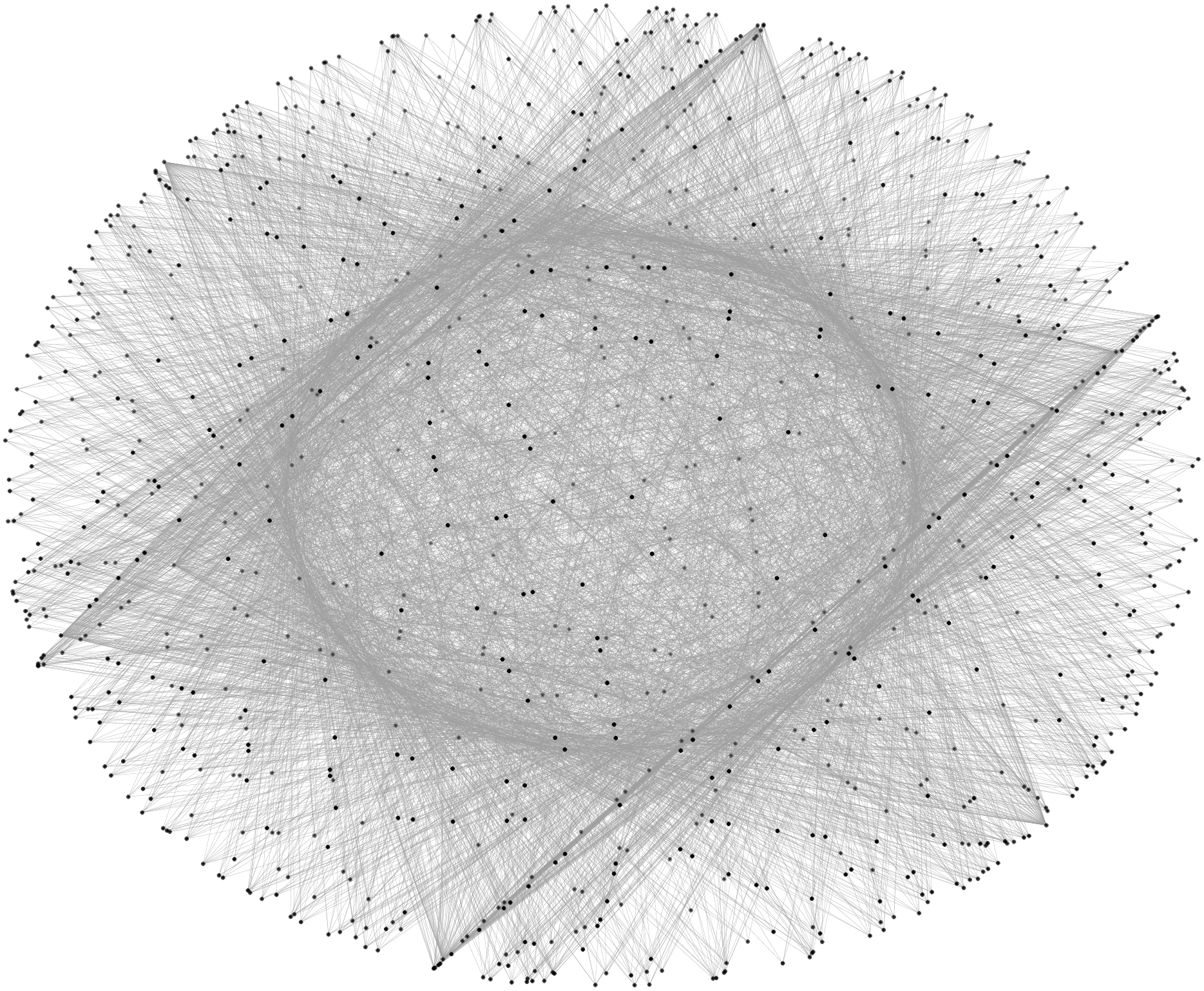}
    \caption{5-chromatic 972-vertex unit distance graph embedded into the circumsphere of the great icosahedron}
    \label{g972}
\end{figure}

\begin{prop}
Let $H_{12,1}$ be a distance embedding of the icosahedron graph in $S^2(r)$ and $H_{9,1}$ is the embedding of $L_{9,1}$. Then for the ``distance product'' we have
\begin{equation*}
    \chi\left( H_{12,1} \circ H_{9,1} \right) = 5.
\end{equation*}
\end{prop}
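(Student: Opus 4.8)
The plan is to prove the two inequalities $\chi(H_{12,1}\circ H_{9,1})\le 5$ and $\chi(H_{12,1}\circ H_{9,1})\ge 5$ separately. The upper bound is the routine direction: the graph $G:=H_{12,1}\circ H_{9,1}$ is finite (it is the $372$-vertex graph $G_{372}$ of Fig.~\ref{g372}, with vertex coordinates recorded in Table~\ref{coord1}), so it suffices to exhibit a single proper coloring with five colors and verify it edge-by-edge. That gives $\chi(G)\le 5$, and all of the substance lies in the lower bound.

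For the lower bound I would argue by contradiction: suppose $c$ is a proper $4$-coloring of $G$. The first step is to pin down $c$ on the twelve icosahedron vertices. Since $H_{12,1}$ is a subgraph of $G$, the restriction of $c$ to these vertices is a proper $4$-coloring of the icosahedron graph; by the Proposition on $\alpha(H_{12,1})$ and $\chi(H_{12,1})$, together with the uniqueness statement of \cite{Ballard}, this coloring is unique up to the action of the icosahedral symmetry group and a permutation of the four colors. Hence, after applying a symmetry of the construction and renaming colors, I may assume the four color classes are exactly the four equilateral $\tau$-triangles forming the inscribed regular tetrahedron; in particular the colors of all twelve base vertices are fixed, and antipodal vertices receive different colors.

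The second step is to propagate this rigid pattern through the glued copies. By the definition of the distance product, for each $f\in\mathcal{F}(H_{12,1},H_{9,1})$ the vertex set of $G$ contains $f(V(H_{9,1}))$, and the induced unit-distance graph on it is a copy of the embedded graph $H_{9,1}$, one of whose edges has been identified with an icosahedron edge $\{u_1,v_1\}$. Because $L_{9,1}$ is $4$-chromatic and $4$-vertex-critical, and because the spherical embedding $H_{9,1}$ carries the extra unit distance noted above (the dashed edge), the already-fixed values $c(u_1)\ne c(v_1)$ on the base edge severely restrict the admissible colors of the remaining seven vertices of the copy. I would first compute, for the abstract copy glued onto a generic icosahedron edge, exactly which color patterns on its free vertices extend the forced colors on $\{u_1,v_1\}$ to a proper $4$-coloring. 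The crucial point is that the copies overlap: images of different isometries $f$ share vertices with one another and occasionally land back on icosahedron vertices, so all these local constraints must hold simultaneously. Tracking them around the configuration forces, at some shared vertex, two incompatible color requirements, i.e.\ an edge of $G$ whose endpoints are both driven to the same color, which is the contradiction that yields $\chi(G)\ge 5$.

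The main obstacle is precisely this last bookkeeping: showing that the locally admissible colorings of the individual $H_{9,1}$ copies cannot be patched into a global proper $4$-coloring consistent with the unique icosahedron pattern. Conceptually the uniqueness of the icosahedron coloring reduces the problem to a finite check, but the number of overlapping copies and the need to understand exactly how each extra unit distance interacts with the fixed tetrahedral color classes make a purely by-hand verification unwieldy; I would therefore carry out this step with computer assistance, enumerating all $4$-colorings of $G$ that restrict to the unique icosahedron coloring and confirming that none of them is proper. Once this enumeration returns empty we obtain $\chi(G)\ge 5$, and combined with the explicit $5$-coloring this gives $\chi(H_{12,1}\circ H_{9,1})=5$.
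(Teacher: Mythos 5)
Your proposal is correct and follows essentially the same route as the paper: the result is established by a finite computer-assisted verification that $G_{372}$ admits no proper $4$-coloring, and the paper likewise exploits the essentially unique $4$-coloring of the icosahedron (starting the search from the $4$-colored icosahedron, yielding a tree of roughly $2\cdot 10^5$ nodes) exactly as you propose via the uniqueness-plus-symmetry reduction. The only difference is presentational: the paper runs an unstructured SAT check (Minisat / IGraph/M) rather than first formally invoking the uniqueness of the icosahedron coloring, but the underlying argument is the same.
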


The graph $H_{12,1} \circ H_{9,1}$ contains 732 vertices and 3390 edges. Preserving the symmetry group and the chromatic number, we can discard vertices of degree 7 and less. The resulting subgraph has 372 vertices and 1710 edges; denote it by $G_{372}$ (Fig. \ref{g372}). There are  4 equivalence classes of vertices under the action of the symmetry group of the icosahedron. The vertices of the original icosahedron have degree 35, in addition, the graph contains 120 vertices of degree 9, and 240 vertices of degree 8. As representatives of last three equivalence classes, one can take the vertices whose coordinates are given in Table \ref{coord1}.  The set of vertices can also be constructed by combining 37 copies of the icosahedron vertices $V(H_{12,1})$ obtained by rotations at angles $\pm\alpha_1, \pm\alpha_2, \pm\alpha_3$ around axes passing through pairs of opposite vertices, where 
\[
\alpha_1 = 0.20895517... , \alpha_2 = 0.53459809..., \alpha_3 = 0.59935172...
\]

It takes less than 1 second to check the absence of a 4-coloring of $G_{372}$  using the classical SAT solver Minisat or the IGraph/M library in Wolfram Mathematica \cite{igraphm}. If we paint the vertices sequentially starting from the 4-colored icosahedron, and at each step choose a vertex for which the number of choices is minimal, then the search tree contains  $\approx 2\cdot10^5$ nodes.  It is possible that here the search can be reduced to the quantity available for manual verification, as it was done in \cite{Parts1} for the case of the plane.

It was found that the chromatic number of a graph $G_{372}$ preserves after deleting any vertex, including the vertex of the initial icosahedron, but decreases to 4 after deleting any class of vertices. We do not give here the subgraphs of $G_{372}$ that have the property of minimality in the sense of conservation of the chromatic number.

A similar, but somewhat more complicated construction was performed for the circumscribed sphere of the great icosahedron with unit edge length. Fig. \ref{embed} (right) shows a distance embedding of the graph $L_{10,1}$ in a sphere of radius
$$r_2 = \cos \frac{3\pi}{10} = \frac{\sqrt{5-\sqrt{5}}}{2\sqrt{2}} = 0.58778...$$ 

\begin{prop}
Let us denote the distance graph formed by vertices of a great icosahedron by $H_{12,2}$ (Fig. \ref{icos}, right). Let $H_{10,1}$ be the embedding of $L_{10,1}$ in a sphere of radius $r_2$ (Fig. \ref{embed}, right). Then
\begin{equation*}
    \chi\left( H_{12,2} \circ H_{10,1} \right) = 5.
\end{equation*}
\end{prop}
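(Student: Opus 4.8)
The plan is to establish the two inequalities $\chi(H_{12,2}\circ H_{10,1})\le 5$ and $\chi(H_{12,2}\circ H_{10,1})\ge 5$ separately, following the same scheme that governs the preceding proposition for $H_{12,1}\circ H_{9,1}$. The upper bound is the routine half: since the graph is finite it suffices to exhibit a single proper coloring in five colors. I would produce such a coloring explicitly, respecting the symmetry of the great icosahedron, and verify directly that no edge is monochromatic. It is in fact enough to $5$-color the reduced $972$-vertex subgraph (Fig.~\ref{g972}) obtained by discarding vertices of degree below $4$, because removing vertices of degree less than $k-1$ neither affects the existence of a $5$-coloring nor changes whether the chromatic number reaches $5$; one checks that it preserves the property $\chi\ge 5$ in both directions.

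The substantive half is $\chi\ge 5$, the nonexistence of a proper $4$-coloring. First I would record the structural properties of the base graph $H_{12,2}$ that play the role of the $H_{12,1}$ proposition: compute $\alpha(H_{12,2})$ and $\chi(H_{12,2})$, and classify all proper $4$-colorings of $H_{12,2}$ up to the action of the symmetry group of the great icosahedron together with color permutations. As in the icosahedral case I expect the maximal independent sets to have a rigid geometric description, so that the admissible $4$-colorings of the base fall into only a few orbits; this rigidity is what makes a finite check feasible.

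Next I would exploit the distance-product construction. By definition $H_{12,2}\circ H_{10,1}=\operatorname{udg}\!\left(\bigcup_{f\in\mathcal F(H_{12,2},H_{10,1})} f(V(H_{10,1}))\right)$, so onto every edge of $H_{12,2}$ we glue the finitely many images of the $4$-chromatic graph $L_{10,1}$ under the isometries in $P(\cdot)$ matching that edge to the edge of $H_{10,1}$. Since $L_{10,1}$ is $4$-vertex-critical, each glued copy imposes strong local constraints: once its two glued-on endpoints inherit colors from the base, the remaining vertices of the copy have almost no freedom. Assuming a proper $4$-coloring of the product and restricting it to $H_{12,2}$, the restriction must agree, up to symmetry, with one of the finitely many base colorings found above. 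For each such base coloring I would propagate the constraints through all glued copies of $H_{10,1}$ and show no consistent extension exists, producing the contradiction. In practice this final step is the finite computation carried out by a SAT solver or the IGraph/M routine, cut down in size by the symmetry.

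The main obstacle is twofold. Geometrically, one must pin down exactly which pairs of vertices in the assembled point set lie at unit distance, including ``accidental'' unit distances that are not edges of $L_{10,1}$ but appear once it is embedded in $S^2(r_2)$ — precisely the phenomenon flagged for $L_{9,1}$ (the dashed edge). A missing or spurious edge would invalidate the coloring argument, so I would work with the exact coordinates given by their minimal polynomials, as in Table~\ref{coord1}, and certify each distance algebraically rather than numerically. Combinatorially, the harder point than in the icosahedral case is that the base graph $H_{12,2}$ need not admit an essentially unique $4$-coloring; several orbits of base colorings would cause the case analysis to branch, which is presumably why this construction is ``somewhat more complicated'' and yields $972$ rather than $372$ vertices. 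Controlling that branching, and certifying that the resulting SAT instance is genuinely unsatisfiable, is where the real work lies.
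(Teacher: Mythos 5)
Your proposal is correct and follows essentially the same route as the paper: the result rests on a finite, exact computation — build the point set from algebraically certified coordinates (so that all unit distances, including accidental ones, are accounted for), discard low-degree vertices to get the $972$-vertex subgraph, and verify the absence of a $4$-coloring by SAT solver / IGraph/M, the $5$-colorability being the easy half. The extra structural layer you propose (classifying the $4$-colorings of $H_{12,2}$ up to symmetry and propagating through the glued copies of $L_{10,1}$) is a sensible way to organize or shrink the search, but the paper does not carry it out for this proposition and simply runs the solver on the reduced graph directly.
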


The graph $H_{12,2} \circ H_{10,1}$ contains 3132 vertices and 10230 edges. After iterative removal of vertices of degree at most 4 we have a graph $G_{972}$ with 972 vertices and 4110 edges (Fig. \ref{g972}). The minimal polynomials for doubled vertex coordinates for each of the representatives of 8 equivalence classes (except the vertices of the great icosahedron) are given in Table \ref{coord2}. As in the previous case, the test for absence of 4-coloring is performed fast enough, less than 5 seconds on a single core of 4 GHz Intel CPU.

The repository \cite{Github} contains Wolfram Mathematica notebooks in which distance graphs $G_{372}$, $G_{972}$ are constructed using analytically given vertex coordinates, and their chromatic number is found using the IGraph/M library~\cite{igraphm}.  

Finally,  we have the following estimates of the chromatic numbers of two-dimensional spheres.

\begin{thm}
\begin{equation*}
    \chi\left(S^2(r_1)\right) \geq 5; \quad r_1 = \cos \frac{\pi}{10} = 0.95105...
\end{equation*}
\begin{equation*}
    \chi\left(S^2(r_2)\right) \geq 5; \quad r_2 = \cos \frac{3\pi}{10} = 0.58778...
\end{equation*}
\end{thm}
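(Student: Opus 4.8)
The plan is to derive both inequalities directly from the two preceding propositions, using nothing more than the monotonicity of the chromatic number under passage to a subgraph. The essential work — producing an explicit finite $5$-chromatic unit distance graph on each of the two spheres — has already been done in the construction of $G_{372}$ and $G_{972}$, so the theorem itself should follow in a single short step, applied twice.

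First I would invoke the proposition asserting $\chi\left(H_{12,1} \circ H_{9,1}\right) = 5$ together with the observation, recorded in the construction, that iteratively discarding vertices of degree at most $7$ preserves the chromatic number. This yields the faithful unit distance graph $G_{372}$, all of whose $372$ vertices are points of $S^2(r_1)$ (their coordinates being those of Table \ref{coord1} and their images under the symmetry group of the icosahedron), with $\chi(G_{372}) = 5$. The point I would emphasize is that $G_{372} = \operatorname{udg}(V)$ for $V \subset S^2(r_1)$, so its edges are exactly the pairs of vertices at Euclidean distance $1$. Consequently any admissible coloring of the whole sphere $S^2(r_1)$, i.e. one assigning distinct colors to points a unit distance apart, restricts to a proper vertex coloring of $G_{372}$. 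Hence
\[
\chi\left(S^2(r_1)\right) \;\geq\; \chi(G_{372}) \;=\; 5 .
\]
Equivalently, one may cite the de Bruijn--Erdős theorem: since the finite set $V(G_{372}) \subset S^2(r_1)$ already requires $5$ colors, so does $S^2(r_1)$.

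Second, I would repeat the identical argument on the circumsphere of the great icosahedron. The corresponding proposition gives $\chi\left(H_{12,2} \circ H_{10,1}\right) = 5$, and iteratively removing vertices of degree at most $4$ produces the faithful unit distance subgraph $G_{972} \subset S^2(r_2)$ with $\chi(G_{972}) = 5$. The same restriction principle then yields $\chi\left(S^2(r_2)\right) \geq \chi(G_{972}) = 5$, completing the proof.

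The genuinely nontrivial ingredient — and therefore the real obstacle — lies not in this final deduction but in the two propositions on which it rests, namely the assertion that the distance products (and hence $G_{372}$, $G_{972}$) admit no proper $4$-coloring. Establishing this is a purely combinatorial matter carried out by a SAT solver or the IGraph/M library on the explicitly given graphs; alongside it one must confirm that the listed coordinates do define genuine points of $S^2(r_1)$, respectively $S^2(r_2)$, realizing the prescribed mutual unit distances, so that the graphs are indeed embedded unit distance graphs. Once those facts are granted, both estimates of the theorem are immediate.
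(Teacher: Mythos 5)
Your proposal is correct and matches the paper's own treatment: the theorem is stated there as an immediate consequence of the propositions that $\chi(H_{12,1}\circ H_{9,1})=5$ and $\chi(H_{12,2}\circ H_{10,1})=5$ (equivalently of their pruned subgraphs $G_{372}$ and $G_{972}$), with the restriction of any admissible sphere coloring to the finite embedded unit distance graph supplying the inequality. You also correctly locate the genuine content in the computationally verified non-4-colorability and the validity of the embeddings, which is exactly where the paper places it.
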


In addition, since $\chi\left(S^2\left(\frac{\sqrt{2}}{2}\right)\right)=4$, we have the following 
\begin{cor}
The function $\chi(r) = \chi(S^2(r))$ is not monotonic.
\end{cor}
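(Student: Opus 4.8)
The plan is to read the corollary off from three already-established values of $\chi(r)$, placed at three radii in increasing order, and to observe that together they form a strict \emph{valley}. The three relevant radii are $r_2 = \cos\frac{3\pi}{10}$, the value $\frac{\sqrt{2}}{2}$ at which Simmons and Godsil proved $\chi\!\left(S^2\!\left(\frac{\sqrt{2}}{2}\right)\right) = 4$ (Table~\ref{sph_estimates}), and $r_1 = \cos\frac{\pi}{10}$.

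First I would fix the ordering of the three radii. Since $\cos$ is strictly decreasing on $\left[0,\frac{\pi}{2}\right]$ and $\frac{\pi}{10} < \frac{\pi}{4} < \frac{3\pi}{10}$, while $\cos\frac{\pi}{4} = \frac{\sqrt{2}}{2}$, it follows that
\[
r_2 \;<\; \frac{\sqrt{2}}{2} \;<\; r_1 .
\]
Next, combining the Theorem with the cited equality assembles the three values $\chi(S^2(r_2)) \ge 5$, then $\chi\!\left(S^2\!\left(\frac{\sqrt{2}}{2}\right)\right) = 4$, then $\chi(S^2(r_1)) \ge 5$, in increasing order of the radius.

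Finally, I would conclude by ruling out either form of monotonicity. If $\chi(r)$ were nondecreasing, then $r_2 < \frac{\sqrt{2}}{2}$ would force $\chi(S^2(r_2)) \le \chi\!\left(S^2\!\left(\frac{\sqrt{2}}{2}\right)\right) = 4$, contradicting $\chi(S^2(r_2)) \ge 5$; if it were nonincreasing, then $\frac{\sqrt{2}}{2} < r_1$ would force $\chi(S^2(r_1)) \le 4$, contradicting $\chi(S^2(r_1)) \ge 5$. Hence $\chi(r)$ is neither nondecreasing nor nonincreasing, that is, not monotonic. I expect no genuine obstacle here: the statement is a purely logical consequence of the Theorem together with the known value at $r = \frac{\sqrt{2}}{2}$, and the only step deserving any attention is the elementary comparison of the three radii, which guarantees that the exact-value point lies strictly between the two lower-bound points and thereby produces the valley pattern.
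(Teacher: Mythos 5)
Your proposal is correct and follows exactly the paper's (implicit) argument: the paper derives the corollary from the Theorem together with $\chi\left(S^2\left(\frac{\sqrt{2}}{2}\right)\right)=4$, which is precisely the valley $r_2 < \frac{\sqrt{2}}{2} < r_1$ with values $\geq 5$, $=4$, $\geq 5$ that you describe. Your write-up merely makes explicit the ordering of the radii and the case analysis that the paper leaves to the reader.
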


\section{Construction for the plane}

For the sake of brevity we denote the Moser spindle by $L_7$. The construction of the 5-chromatic unit distance graph by M. Heule was based on the following idea: one should consider sets of vertices for which the corresponding unit distance graph contains a large number of $L_7$ as subgraphs and then construct a union of this set with its copy rotated by some angle around the common symmetry center (the origin). In this section it will be shown that instead of  $L_7$, another 4-chromatic unit distance graph can be chosen. We are going to generalize the construction by M. Heule \cite{Heule1}, consider other small 4-chromatic graphs as a starting point and perform enumeration of rotations moving one of the subsets of vertices to another. 

For convenience we will further consider graph embeddings in $\mathbb{C}$ instead of embeddings in $\mathbb{R}^2$.

Let $L$ be a 4-chromatic rigid unit distance graph. Consider such an embedding of $L$ in $\mathbb{C}$  when the ends of some edge $(u,v) \in E(L)$ are points $u=0$ and $v=1$. 

Then consider the finite set  
\[
D(L) = \{u-v \; | \; (u,v) \in E(L) \}
\]
and a finitely generated group consisting of all possible products of elements of $D(L)$ and their inverses
\[
g(L) = \langle D(L) \rangle.
\]
Let us choose some minimal set of generators 
\[
\Phi(L)=\{\phi_0, \dots , \phi_p\}, \quad \langle \Phi(L) \rangle = g(L).
\]

Here $\phi_0$ is a finite-order element, $\phi_0^k=1$. In the general case there may not be such an element, but we consider only such $L$ for which it exists (Fig. \ref{gen_k7}).

Further, let's define a subset consisting of elements of the group using constraints on the degrees of the generators:

\begin{equation*}
    M_1 = M_1(L; t) = \{ 0 \} \cup \left\{ \phi_0^{\alpha_0} \phi_1^{\alpha_1} \dots \phi_p^{\alpha_p}    \right\}; 
 \end{equation*}   
    
\begin{equation*}
     0 \leq \alpha_0 \leq k-1; \; -t \leq \alpha_i \leq t.
\end{equation*}

Next, define

\begin{equation*}
    \operatorname{clip}(M; r) = \{ z \in M \; : \; |z|\leq r \}, \quad M \subset \mathbb{C}, \; r>0;
\end{equation*}

\begin{equation}
    M_s = \operatorname{clip}(M_{s-1} + M_1; r_s). 
    \label{minksum}
\end{equation}

Here we mean the Minkowski sum as sum of sets. Clipping parameters $r_2, \dots, r_s$ allows us to minimize the number of  vertices of small degree in the resulting graph. 

Thus, $M_1$ is a set having k-fold symmetry and consisting of the origin and a finite number of points on the circle $|z|=1$, $M_2=M_1+M_1$ is the Minkowski sum, etc. We can expect that the distance graph constructed on the set of vertices $M_s$ at sufficiently large $t$ and $s$ will contain a lot of copies of the original graph $L$ as subgraphs.  

\begin{figure}
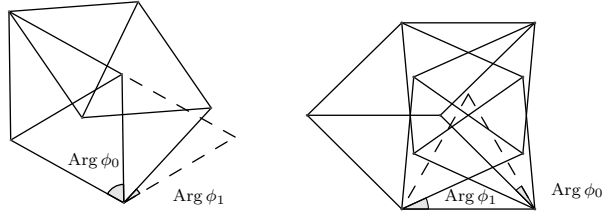

    \centering
\begin{tabular}{p{3.5cm} p{3.5cm} } 

\includegraphics[width=3cm]{gen_moser} &\includegraphics[width=4cm]{gen_sqrt2}
 \\

%\hline
\end{tabular}    
    \caption{Generators of the group,  $L_7$ and $L_{10,2}$}
    \label{gen_k7}
\end{figure}

The last step of constructing the set of vertices is as follows:
\begin{equation*}
    W = M_s \cup \psi M_s,
\end{equation*}
where  $\psi \in \mathbb{C}$, $|\psi|=1$, defining the rotation, is chosen so that the distance graph built on the set of vertices $W$ contains edges connecting $M_s \setminus \{0\}$ and $\psi M_s \setminus \{0\}$  (Fig. \ref{fig_rotation}). Trying a finite set of  values for $\psi$, with proper choice of initial graph $L$ and other parameters we can discover a 5-chromatic distance graph. In case it is impossible to consider all variants, it seems a natural idea to arrange the found values of $\psi$ in descending order by the number of new edges.

%A brief description of the algorithm is given in the Appendix \ref{algo}.

\begin{figure}
    \centering
    \includegraphics[width=4cm]{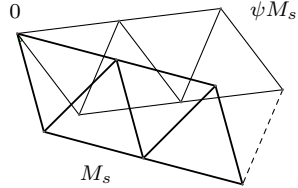}
    \caption{Rotation about the origin }
    \label{fig_rotation}
\end{figure}

\begin{rmk} Recall that the radius of a graph $G$ is  
$$r(G) = \min_{u \in V(G)} \max_{v \in V(G)} \operatorname{d} (u,v).$$
Here $\operatorname{d}(u,v)$ is the distance in the abstract graph. Then the minimal $s$ at which the construction described above contains a subgraph isomorphic to $L$ is not smaller than  $r(L)$.  Therefore, it is preferable to consider graphs with a smaller radius. For two graphs considered in the following section as a basic element of the construction $r(L_7)=r(L_{10,2})=2$. 
\end{rmk}

\begin{rmk}
 If we add to the generators of a discrete group an element that translates one vertex of some unit segment to the other, it will not be necessary to determine the Minkowski sum.  One can define the set of vertices as a subset of an orbit of some point under the action of a discrete group of isometries subject to limitations on the degrees of the generators. Such a construction can be applied to a distance graph embedded in a sphere or in the hyperbolic plane. 
\end{rmk}

\section{New examples for the plane}

This section presents the results of two series of experiments. In the first one, the Moser spindle was used as graph $L$, and in the second one, the graph $L_{10,2}$ was used. 

In both cases the removal of part of the vertices was applied only after the first addition as in the paper by M. Heule \cite{Heule1}, i.e. clipping parameters in (\ref{minksum}) are $r_2 = 1$, $r_3 = \infty$.

\textbf{Series 1}. Let $L = L_{7}$. Then one of the possible set of generators is 
$$\phi_0 = \frac{1}{2}+ \frac{\sqrt{3}}{2}i,\quad \phi_0^6=1, \quad \phi_1 = \frac{\sqrt{33}}{6}+ \frac{\sqrt{3}}{6}i.$$ 
Choose $t=2$. Then the set $M_1(L_{7}, t)$ consists of 31 points, including the zero, and the set $M_3$ contains 1939 points, following \cite{Heule1}.  

\textbf{Series 2}. Let $L = L_{10, 2}$. Choose the generators: $$\phi_0 = \frac{\sqrt{6}+\sqrt{2}}{4}+ \frac{\sqrt{6}-\sqrt{2}}{4}i,\quad \phi_0^{24}=1, \quad \phi_1 =  \frac{\sqrt{6}}{3}+\frac{\sqrt{3}}{3}i.$$ Let $t = 1$. Then $M_1(L_{10,2}, t)$ contains $73$ points, and $M_3$ contains
32257 points.  

The results are given in Table~\ref{tab_series}. By \emph{common edges} we mean edges of $\operatorname{udg}(M_3 \cup \psi M_3)$ that exist for any $\psi$ of unit norm. Accordingly, the \emph{new edges} are of the form $(u,v)$, $u \in M_3 \setminus \{0\}$, $v \in \psi M_3 \setminus \{0\}$.

\begin{rmk}
In the second series only the cases where some new edge appears between sets $M_2 \setminus \{0\}$ and $\psi M_2 \setminus \{0\}$ were fully considered.
\end{rmk}

\begin{table}[ht]
    \centering
    \begin{tabular}{|c|c|c|c|c|c|c|}
    \hline
        \# & Graph $L$ & Vertices  &  Common edges  &  Cases & 5-chromatic   \\
        \hline
        1 & $L_{7}$ & 3877 &  26748 & 31375 & 5 \\
        \hline
        2 & $L_{10,2}$ & 64513 & 542352  & 2731 & 14  \\
        \hline
    \end{tabular}
    \caption{Series of computations.}
    \label{tab_series}
\end{table}

In the first series of experiments, in addition to the  construction found by M. Heule (graph \#1, for which  $\psi = \psi_*=\frac{7}{8} + \frac{\sqrt{15}}{8}i$),  other similar cases were found. All the 5-chromatic distance graph found can be embedded in $\mathbb{Q}(i,\sqrt{3},\sqrt{5},\sqrt{11})\subset\mathbb{C}$. The values of $\psi$ (i.e. rotation angles found) are shown in Table \ref{tab_results1}. Note that the first 5-chromatic graph appears as number 6 in the enumeration, and it takes less than 1 minute to find it.

\begin{table}[ht]
    \centering
    \begin{tabular}{|c|c|c|c|}
    \hline
         \# & $\operatorname{Im}\psi$ & $\psi$ & New edges \\
        \hline
        $1$ & $0.48412291...$ & $\psi_*$ & 66\\
        \hline
        
        $2$ & $0.24642936...$ & $\phi_0 \phi_1^{-1} \psi_*^{-1}$ & 48 \\
        \hline
        $3$ & $0.21092166...$ & $ \phi_1^{-1} \psi_*$ & 48 \\
        \hline
        $4$ & $0.08023868...$ & $ \phi_1^{2} \psi_*^{-1}$ & 30 \\
        \hline
        $5$ & $0.04383445...$ & $\phi_0^{-1} \phi_1^{2} \psi_*$ & 30 \\
        \hline
    \end{tabular}
    \caption{5-chromatic graphs, series 1}
    \label{tab_results1}
\end{table}

In the second series of experiments, there were new constructions of 5-chromatic unit-distance graphs (Table~\ref{ang_sqrt2}, Fig.~\ref{patterns2}). Except for the three cases, we can assume $z=2$ and $w \in M_2$  choosing a pair of vertices $(z,w)$ that determines the equation for $\psi' = \psi\phi_0^p$ 
\[
|z-\psi' w|=1, 
\]
see Fig. \ref{rotate_sqrt2}. 
\begin{table}[ht]
    \centering
    \begin{tabular}{|c|c|c|c|}
    \hline
        \# & $\operatorname{Im}\psi$ & $(z,w)$ & New edges   \\
        \hline
        $1$ & $0.12388487...$ & $2, 1+\phi_0^5$ & 240 \\
        \hline
        $2$ & $0.11455695...$ & $2, 1+\phi_0$ & 240 \\
        \hline
        $3$ & $0.00903168...$ & $2, 1+\phi_0^7$ & 240 \\
        \hline
        $4$ & $0.08275438...$ & $2, \phi_1(1+\phi_0^7)$ & 144  \\
        \hline
        $5$ & $0.03231741...$ & $2, \phi_1(1+\phi_0^5)$ & 144 \\
        \hline
        $6$ & $0.02292606...$ & $2, \phi_1(1+\phi_0)$ & 144 \\
        \hline
        $7$ & $0.05508155...$ & $2, \phi_1^{-1}(1+\phi_0)$ & 144\\
        \hline
        $8$ & $0.04569855...$ & $2, \phi_1^{-1}(1+\phi_0^5)$ & 144\\
        \hline
        $9$ & $0.01007415...$ & $2, \phi_1^{-1}(1+\phi_0^7)$ & 144\\
        \hline
        $10$ & $0.01823725...$ & $2,2$ & 120 \\
        \hline
        $11$ & $0.08672634...$ & $\phi_1\phi_0^{5}+\phi_1^{-1}\phi_0^{-5}, \phi_1\phi_0^{2}+\phi_1^{-1}\phi_0^{-2}$ & 120\\
        \hline
        $12$ & $0.12702917...$ & $\phi_1\phi_0^{4}+\phi_1^{-1}\phi_0^{-4}, \phi_1\phi_0^{2}+\phi_1^{-1}\phi_0^{-2}$ & 48\\
        \hline
        $13$ & $0.00301228...$ & $2, \phi_1 \phi_0^4 + \phi_1^{-1} \phi_0^{-4}$ & 48 \\
        \hline
        $14$ & $0.09860920...$ & $\phi_1\phi_0^{4}+\phi_1^{-1}\phi_0^{-4}, \phi_1\phi_0^{4}+\phi_1^{-1}\phi_0^{-4}$ & 24 \\
        \hline
    \end{tabular}
    \caption{5-chromatic graphs, series 2}
    \label{ang_sqrt2}
\end{table}

\begin{prop}
\[
\chi\left(\mathbb{Q}(i,\sqrt{2},\sqrt{3},\sqrt{5})\right)\geq 5.
\]
\end{prop}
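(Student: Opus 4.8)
The plan is to prove the lower bound by exhibiting a single finite unit distance graph that is 5-chromatic and all of whose vertices, regarded as points of $\mathbb{C}$, lie in the field $K=\mathbb{Q}(i,\sqrt{2},\sqrt{3},\sqrt{5})$. Since $\chi(X)\ge\chi(\operatorname{udg}(V))$ for any $V\subseteq X$, the existence of such a graph with vertex set inside $K$ immediately gives $\chi(K)\ge 5$. For the graph I would take one of the 5-chromatic configurations $W=M_3\cup\psi M_3$ produced by Series~2 (say the case numbered $10$ in Table~\ref{ang_sqrt2}, the one determined by $(z,w)=(2,2)$), whose 5-chromaticity has already been certified computationally. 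The whole argument then reduces to bookkeeping of which field the coordinates occupy.

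First I would check that $M_3\subset\mathbb{Q}(i,\sqrt{2},\sqrt{3})$. The generators of Series~2 are
\[
\phi_0=\frac{\sqrt{6}+\sqrt{2}}{4}+\frac{\sqrt{6}-\sqrt{2}}{4}\,i,\qquad \phi_1=\frac{\sqrt{6}}{3}+\frac{\sqrt{3}}{3}\,i,
\]
and since $\sqrt{6}=\sqrt{2}\sqrt{3}$ both $\phi_0$ and $\phi_1$, hence all of their integer powers, lie in $\mathbb{Q}(i,\sqrt{2},\sqrt{3})$. The set $M_1$ is $\{0\}$ together with finitely many monomials $\phi_0^{\alpha_0}\phi_1^{\alpha_1}$, so $M_1\subset\mathbb{Q}(i,\sqrt{2},\sqrt{3})$; as this field is closed under addition, the Minkowski sums defining $M_2$ and $M_3$ stay inside it, and the clipping operation only discards points. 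Thus $M_3\subset\mathbb{Q}(i,\sqrt{2},\sqrt{3})$.

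Next I would locate the rotation in $K$. Writing $\psi'=x+iy$ with $|\psi'|=1$, the defining equation $|z-\psi' w|=1$ for the pair $(z,w)=(2,2)$ becomes $|1-\psi'|=\tfrac12$, i.e. $(1-x)^2+y^2=\tfrac14$ together with $x^2+y^2=1$; solving gives $x=\tfrac78$ and $y^2=\tfrac{15}{64}$, so
\[
\psi'=\frac{7}{8}+\frac{\sqrt{15}}{8}\,i .
\]
Because $\sqrt{15}=\sqrt{3}\sqrt{5}$ we have $\psi'\in\mathbb{Q}(i,\sqrt{3},\sqrt{5})\subset K$, and since $\psi$ differs from $\psi'$ only by a power of the $24$th root of unity $\phi_0\in\mathbb{Q}(i,\sqrt{2},\sqrt{3})$, the rotation $\psi$ itself lies in $K$. (This $\psi'$ coincides with the angle $\psi_*$ from Series~1, which is reassuring.) Consequently $\psi M_3\subset K$, and therefore $W=M_3\cup\psi M_3\subset K$.

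Finally, combining the two inclusions, $\operatorname{udg}(W)$ is a 5-chromatic unit distance graph whose vertex set is contained in $K$, which yields $\chi(K)\ge 5$. The genuine content of the statement is the 5-chromaticity of $\operatorname{udg}(W)$, already established by verifying the absence of a proper 4-coloring with a SAT solver; the field-membership part is routine. The one point that requires care — and the place where a poor choice of case would break the argument — is selecting a 5-chromatic rotation whose defining equation introduces no irrationality beyond $\sqrt{5}$ over $\mathbb{Q}(i,\sqrt{2},\sqrt{3})$; the pair $(z,w)=(2,2)$ is convenient precisely because it produces $\sqrt{15}$ and nothing more.
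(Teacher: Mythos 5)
Your proposal is correct and follows essentially the same route as the paper: the paper's proof simply notes that in case 10 one has $\psi=\phi_0^{-1}\bigl(\tfrac{7}{8}+\tfrac{\sqrt{15}}{8}i\bigr)$, hence the graph embeds in $\mathbb{Q}(i,\sqrt{2},\sqrt{3},\sqrt{5})$. You supply the same argument with the routine details (membership of $M_3$ in $\mathbb{Q}(i,\sqrt{2},\sqrt{3})$ and the derivation of $\psi'$ from $|1-\psi'|=\tfrac12$) written out explicitly.
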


\begin{figure}[ht]
    \centering
\includegraphics[width=7cm]{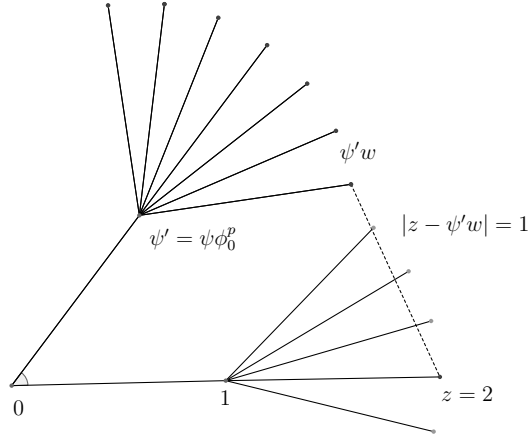}
    \caption{The equation for $\psi$, cases 1-3, series 2}
    \label{rotate_sqrt2}
\end{figure}

\begin{proof}
In fact, in case 10, 
\[
\psi = \phi_0^{-1}\left(\frac{7}{8} + \frac{\sqrt{15}}{8}i\right). 
\]
Therefore, the graph constructed in the case 10 can be embedded in  
\[
\mathbb{Q}(i,\sqrt{2},\sqrt{3},\sqrt{5}).
\] 
\end{proof}

As a simple corollary one can observe that this graph can't contain the Moser spindle as a subgraph because this field does not contain $\sqrt{11}$. This statement is also true for the other graphs constructed in the second series of computations.

\begin{prop}
    None of the 5-chromatic graphs $G_{64513,k}$, $k=1, \dots, 14$ contain $L_7$ as a subgraph.
\end{prop}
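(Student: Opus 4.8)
The plan is to convert the purely combinatorial assertion ``$L_7 \not\subseteq G_{64513,k}$'' into a field-theoretic obstruction: the Moser spindle forces the number $\sqrt{11}$ into the field generated by its vertex coordinates, whereas every graph $G_{64513,k}$ has all its vertices in a field that does not contain $\sqrt{11}$. The argument then follows from a single incompatibility, much in the spirit of the remark preceding the statement.

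First I would pin down the coordinate field of the graphs. By construction each $G_{64513,k}$ is a subgraph of $\operatorname{udg}(M_3 \cup \psi_k M_3)$, so all of its vertices lie in $M_3 \cup \psi_k M_3$. Since the chosen generators $\phi_0 = \tfrac{\sqrt{6}+\sqrt{2}}{4} + \tfrac{\sqrt{6}-\sqrt{2}}{4}i$ and $\phi_1 = \tfrac{\sqrt{6}}{3} + \tfrac{\sqrt{3}}{3}i$ both lie in $\mathbb{Q}(i,\sqrt{2},\sqrt{3})$, the iterated Minkowski sum $M_3$ is contained in $\mathbb{Q}(i,\sqrt{2},\sqrt{3})$. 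It then remains to check that $\psi_k \in K := \mathbb{Q}(i,\sqrt{2},\sqrt{3},\sqrt{5})$ for every $k$. Each $\psi_k$ (up to the factor $\phi_0^{p}$, which stays in $\mathbb{Q}(i,\sqrt{2},\sqrt{3})$) solves an equation $|z-\psi' w|=1$ with $z,w \in M_3$; writing $\psi'=x+iy$ with $x^2+y^2=1$ and $a=\bar z w = a_1+i a_2$ turns this into the line--circle system $a_1 x - a_2 y = c$, $x^2+y^2=1$ with $c=\tfrac12(|z|^2+|w|^2-1)$. Its discriminant $D = |a|^2 - c^2$ lies in $\mathbb{Q}(\sqrt{2},\sqrt{3})$, so $\psi_k \in \mathbb{Q}(i,\sqrt{2},\sqrt{3},\sqrt{D})$. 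A direct evaluation of $D$ for the $14$ rows of the table confirms $\sqrt{D}\in\mathbb{Q}(\sqrt{2},\sqrt{3},\sqrt{5})$ in each case (extending the computation already recorded for case $10$), whence $V(G_{64513,k}) \subset K$ for all $k$, as $K$ is a field and $M_3,\psi_k M_3 \subset K$.

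Next I would record the elementary fact that $\sqrt{11}\notin K$. Because $K$ has maximal real subfield $\mathbb{Q}(\sqrt{2},\sqrt{3},\sqrt{5})$ and $\sqrt{11}$ is real, it suffices that $\sqrt{11}\notin\mathbb{Q}(\sqrt{2},\sqrt{3},\sqrt{5})$; but this is a multiquadratic field whose quadratic subfields are exactly the $\mathbb{Q}(\sqrt{d})$ with $d$ squarefree and $d\mid 30$, and $11\nmid 30$. The geometric heart is then a shape invariant of $L_7$: realizing the spindle as two rhombi (each two unit equilateral triangles glued along a unit edge) sharing the central vertex $O$, with far tips $A_1,A_2$ joined by the extra edge, the five unit edges of each rhombus rigidly force $|O-A_1|=|O-A_2|=\sqrt{3}$, while $|A_1-A_2|=1$. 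Hence $O A_1 A_2$ is a triangle with sides $\sqrt{3},\sqrt{3},1$ and area $\sqrt{11}/4$. Its signed area equals $\tfrac12\operatorname{Im}\!\bigl(\overline{(A_1-O)}\,(A_2-O)\bigr)$, and if $O,A_1,A_2\in K$ then, since $i\in K$ and $K$ is closed under conjugation, this quantity lies in $K$, forcing $\sqrt{11}\in K$.

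Finally I would assemble the contradiction: any copy of $L_7$ inside some $G_{64513,k}$ would place its genuine vertices $O,A_1,A_2$ in $K$, forcing $\sqrt{11}\in K$ and contradicting the previous paragraph; hence no $G_{64513,k}$ contains $L_7$. I expect the conceptual step, the area invariant of the third paragraph, to be short and robust (it uses only rigidity of a single rhombus and closure of $K$ under conjugation, so it applies to \emph{every} realization of the spindle, not merely a normalized one). The only genuinely laborious part, and the main obstacle, is the first paragraph: verifying for all $14$ rotation angles $\psi_k$ that the discriminant $D$ remains inside $\mathbb{Q}(\sqrt{2},\sqrt{3},\sqrt{5})$ and, crucially, never produces $\sqrt{11}$. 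This is a finite, mechanical check best carried out alongside the exact data for $M_3$ and the pairs $(z,w)$ listed in the table.
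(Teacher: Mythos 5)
Your third paragraph (the area invariant of the triangle $OA_1A_2$ with sides $\sqrt{3},\sqrt{3},1$, whose signed area $\tfrac12\operatorname{Im}\bigl(\overline{(A_1-O)}\,(A_2-O)\bigr)=\pm\sqrt{11}/4$ must lie in any conjugation-closed coordinate field containing $i$) is correct and is a clean way to make precise the remark in the paper that a field without $\sqrt{11}$ cannot host a Moser spindle. The gap is in your first paragraph: the claim that $\sqrt{D}\in\mathbb{Q}(\sqrt{2},\sqrt{3},\sqrt{5})$ for all $14$ rows is false, and the paper never asserts it --- it establishes the embedding into $\mathbb{Q}(i,\sqrt{2},\sqrt{3},\sqrt{5})$ only for case $10$. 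For case $1$, with $z=2$ and $w=1+\phi_0^5$, the discriminant is (up to a rational square) $D=5+3\sqrt{6}-3\sqrt{2}+\sqrt{3}$; its Galois conjugate under $\sqrt{3}\mapsto-\sqrt{3}$ is $5-3\sqrt{6}-3\sqrt{2}-\sqrt{3}<0$, so no positive rational multiple of $D$ is a square in the totally real field $\mathbb{Q}(\sqrt{2},\sqrt{3})$, and hence $\sqrt{D}$ lies in \emph{no} field of the form $\mathbb{Q}(\sqrt{2},\sqrt{3},\sqrt{m})$, in particular not in $\mathbb{Q}(\sqrt{2},\sqrt{3},\sqrt{5})$. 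So the field containment on which your contradiction rests fails for most of the $14$ graphs, and ``a direct evaluation confirms'' is asserting the outcome of a computation that would in fact come out the other way.

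The strategy is repairable, but only after weakening the claim: you need $\sqrt{11}\notin\mathbb{Q}(\sqrt{2},\sqrt{3},\sqrt{D_k})$, which by Kummer theory amounts to checking that $11D_k$ is not a square in $\mathbb{Q}(\sqrt{2},\sqrt{3})$ for each $k$ (for case $1$ the negative conjugate above settles it immediately; for case $10$ one checks $11\cdot 15$ is not a square). That finite verification is the real content and must be stated and carried out. The paper takes a different, more combinatorial route: it first argues that $M_3\subset\mathbb{Q}(i,\sqrt{2},\sqrt{3})$ can contain at most $4$ vertices of any spindle, forming a diamond $K_4\setminus e$, so a copy of $L_7$ would have to split into two diamonds lying in $M_3$ and $\psi M_3$ with the hub at their unique common point; this forces $\psi=\phi_0^q\phi_1^p\bigl(\tfrac56\pm\tfrac{\sqrt{11}}{6}i\bigr)$ for some $q,p$ in a finite range, and none of the $14$ tabulated values of $\psi$ is of this form. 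That argument needs no knowledge of which quadratic extension each $\psi_k$ generates, which is precisely the information your version gets wrong.
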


\begin{proof}
Indeed,  $M_3 \subset V(G_{64513,k})$ contains at most 4 vertices of $L_7$ forming a diamond subgraph (i.e. $D_4 = K_4 \setminus e$). So, $L_7$ can be a subgraph of $G_{64513,k}$ only if two diamond subgraphs of $L_7$ belong to two subsets $M_3$ and $\psi M_3$ with a single common vertex. On the other hand, it's easy to check that the values of $\psi$ from Table \ref{ang_sqrt2} do not coincide with $\phi_0^q\phi_1^p\left(\frac{5}{6} \pm\frac{\sqrt{11}}{6}i\right)$, $0 \leq q \leq 23$, $-2\leq p \leq 2$ which is necessary to contain $L_7$ as a subgraph.
\end{proof}

Analytic verification of existence of the graphs constructed in this section is done in Sage 9.2 environment, and takes 1-2 minutes for a graph with 64513 vertices, if only new edges are checked~\cite{Github}.

\begin{figure}[ht]
\begin{center}
\begin{tabular}{ p{3cm} p{3cm} p{3cm}  } 
\includegraphics[width=3cm]{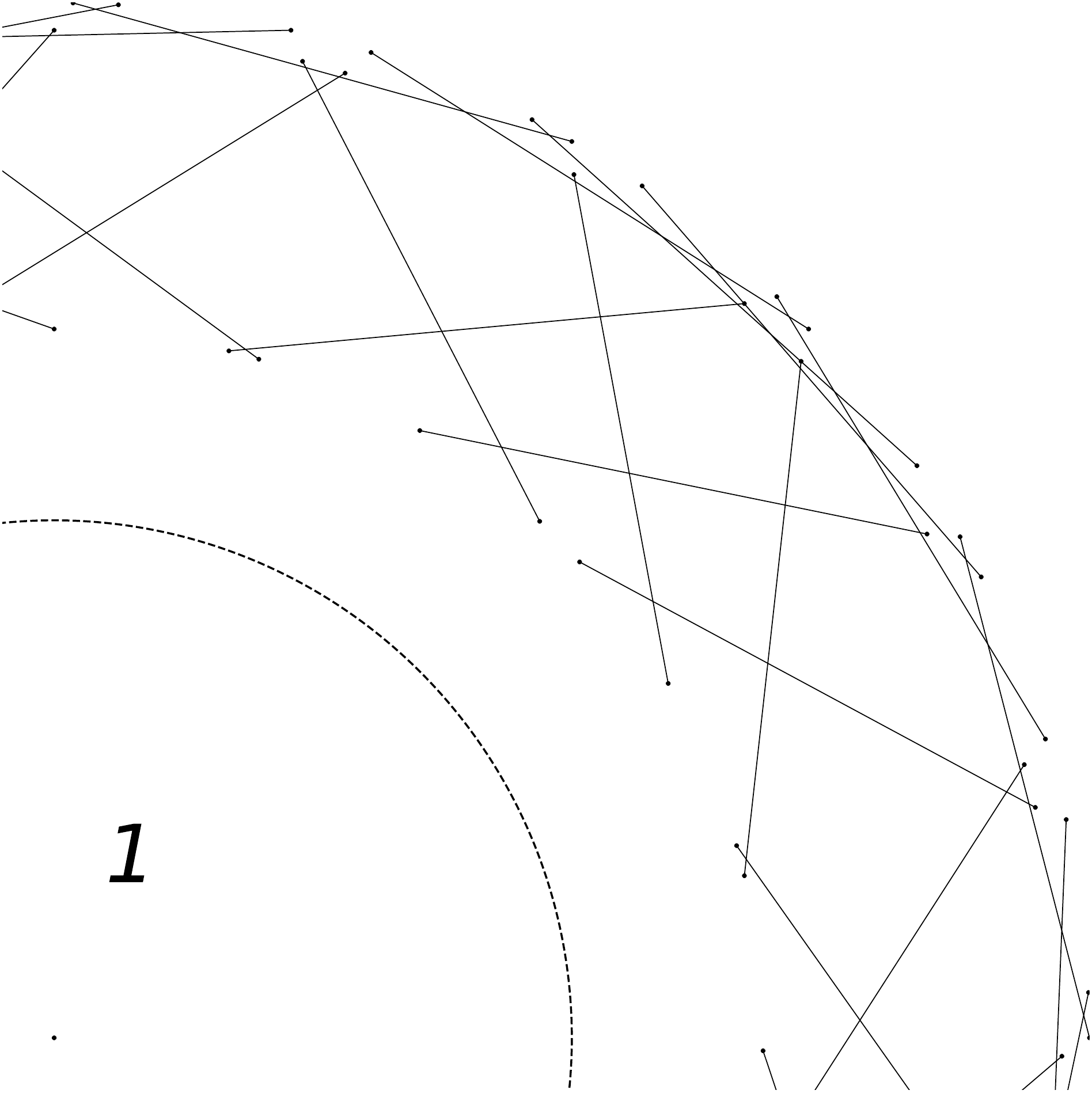} & \includegraphics[width=3cm]{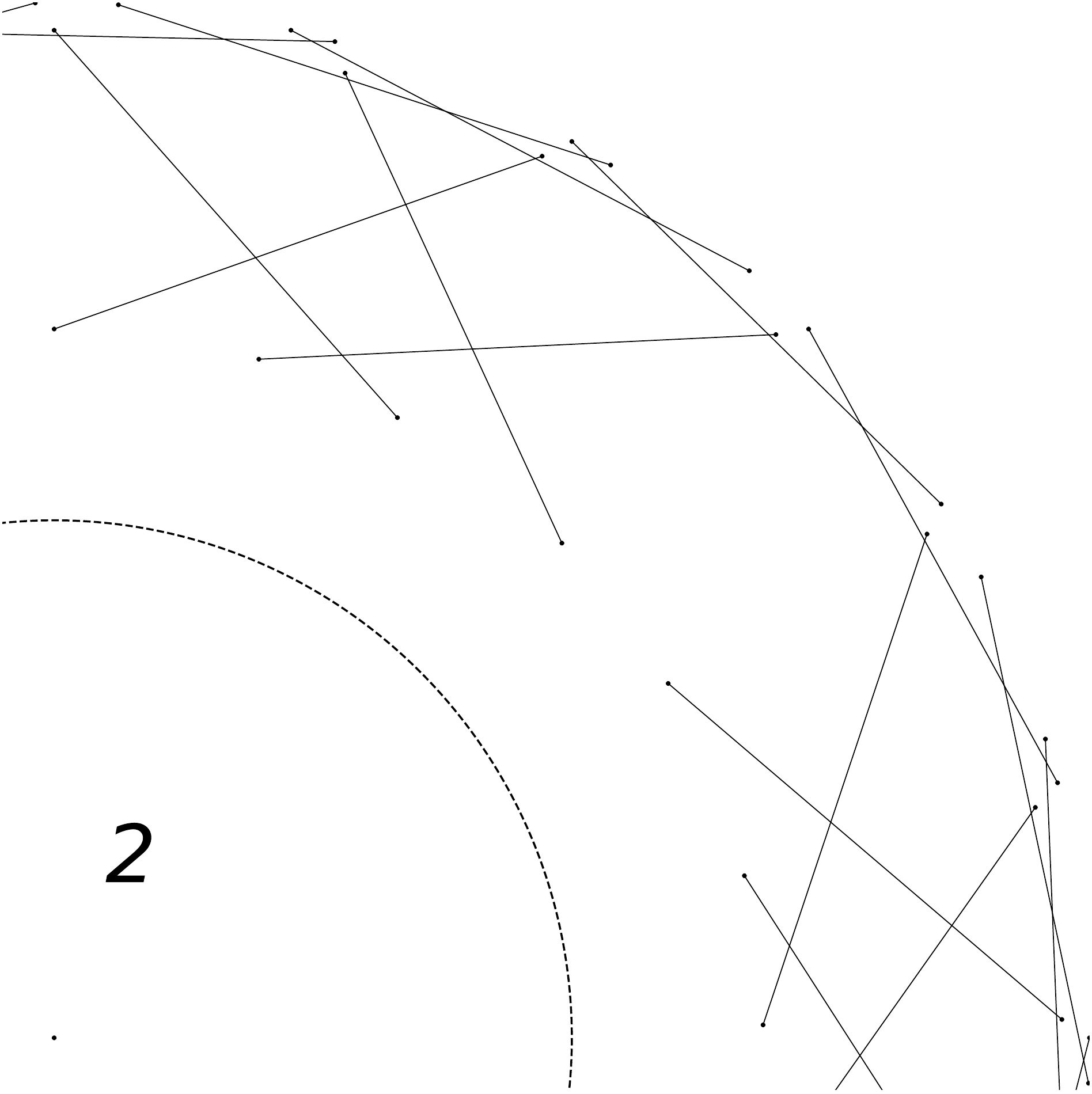} & \includegraphics[width=3cm]{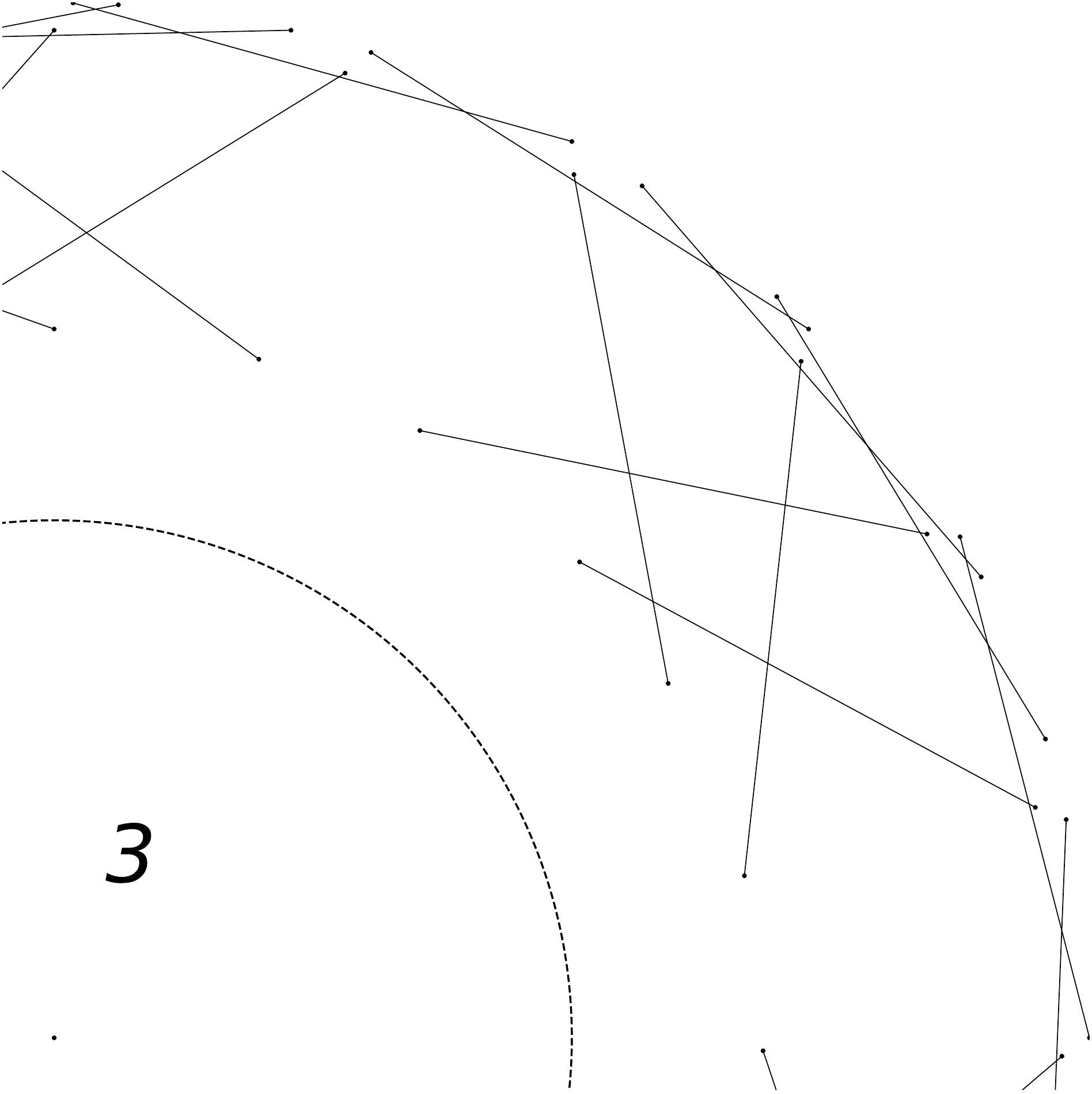} \\ \includegraphics[width=3cm]{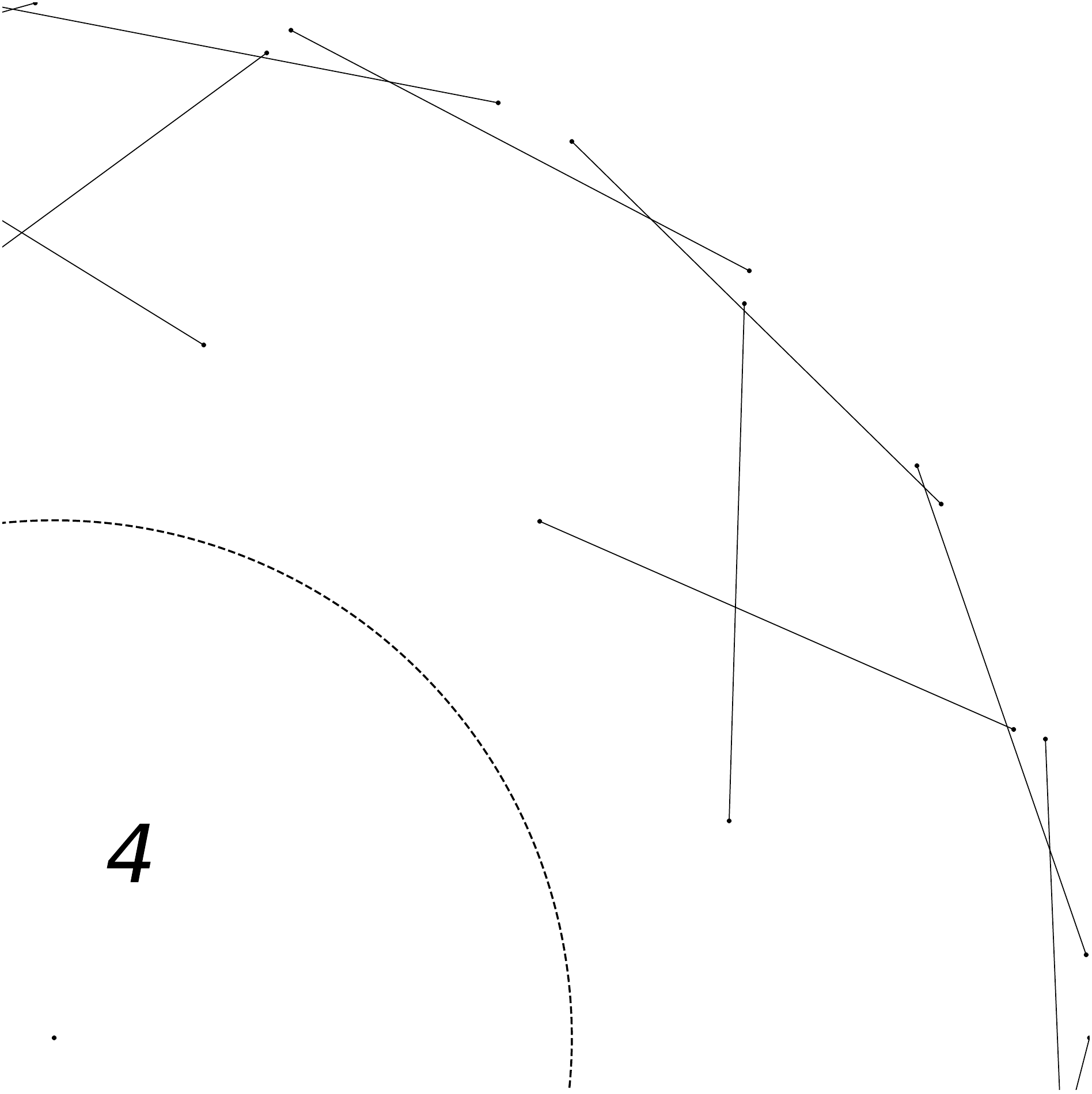} & \includegraphics[width=3cm]{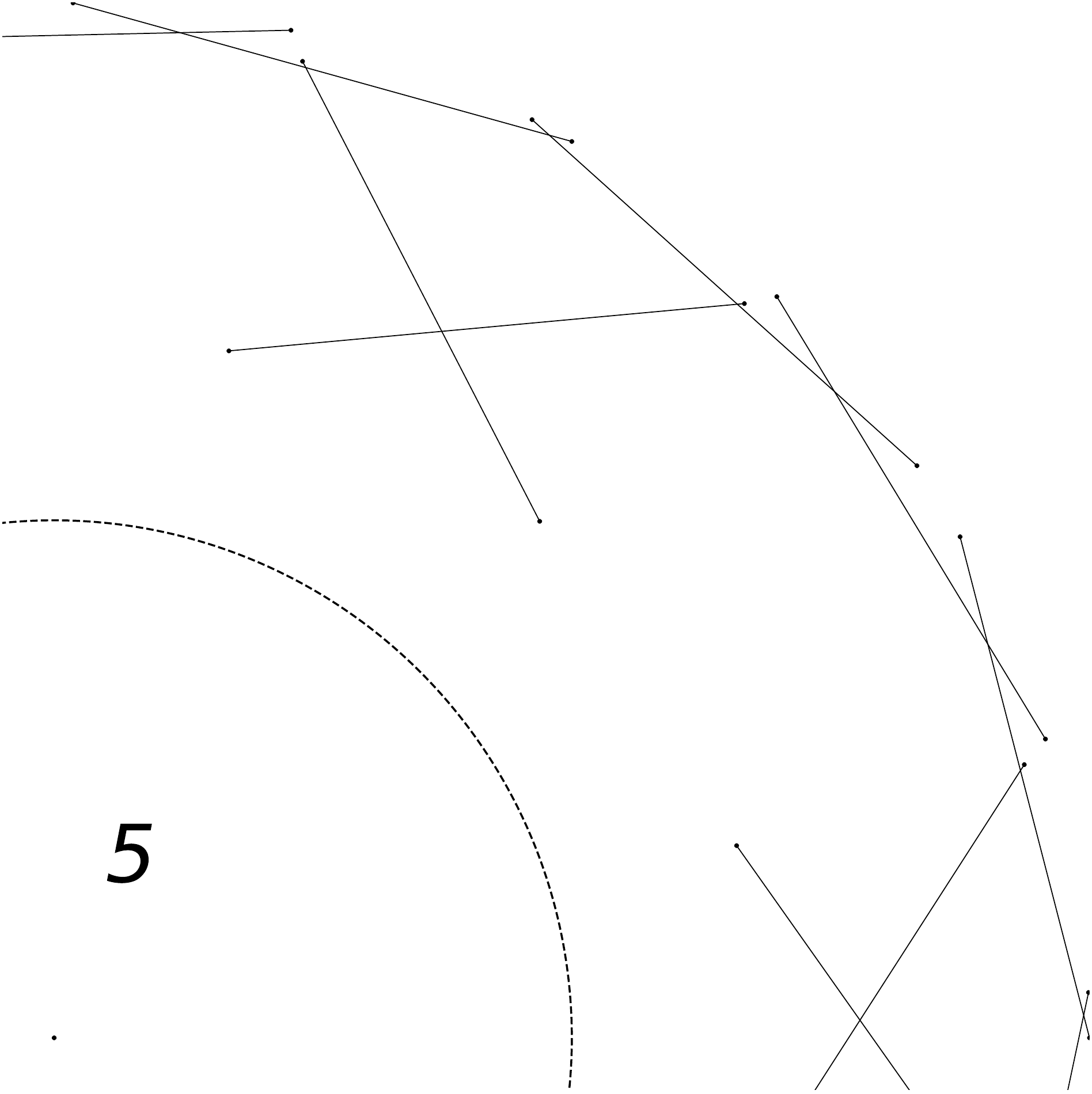} &
 \\
%\hline
\end{tabular}
\end{center}
\caption{New edges in 5-chromatic graphs, series 1 }
\label{patterns1}
\end{figure}

\begin{figure}[ht]
\begin{center}
\begin{tabular}{ p{2.5cm} p{2.5cm} p{2.5cm} p{2.5cm} } 
%\hline
\includegraphics[width=2.5cm]{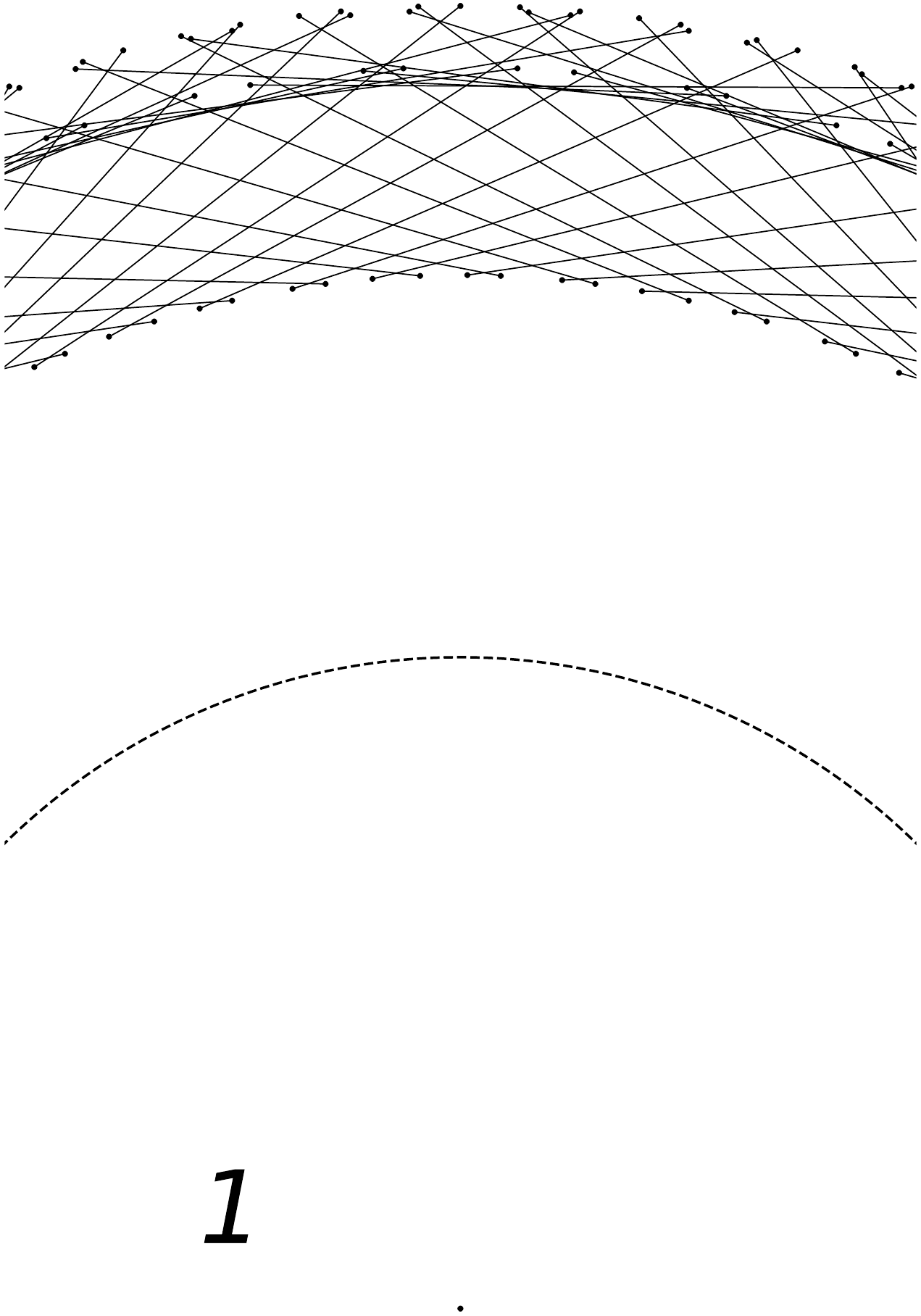} & \includegraphics[width=2.5cm]{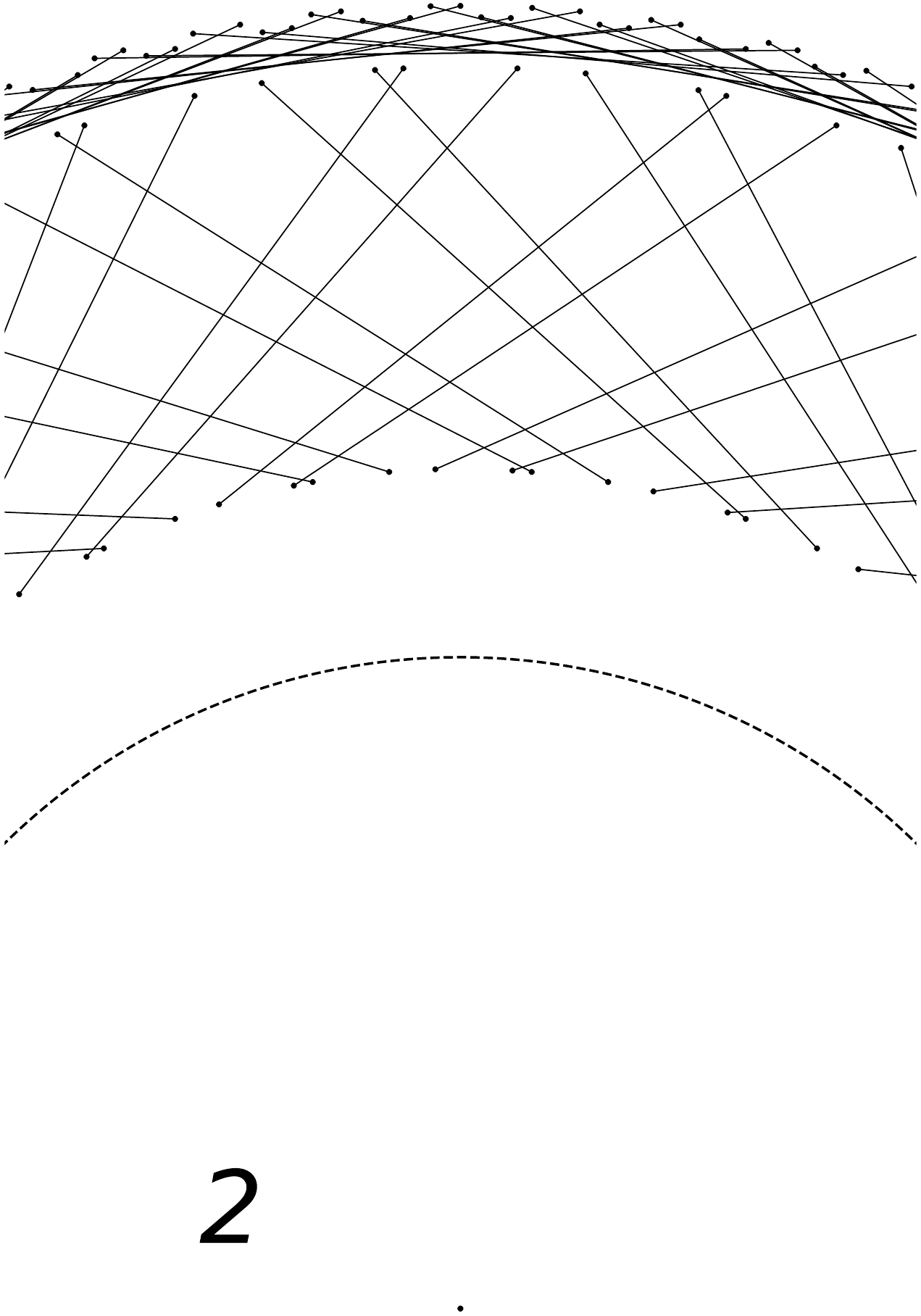} & \includegraphics[width=2.5cm]{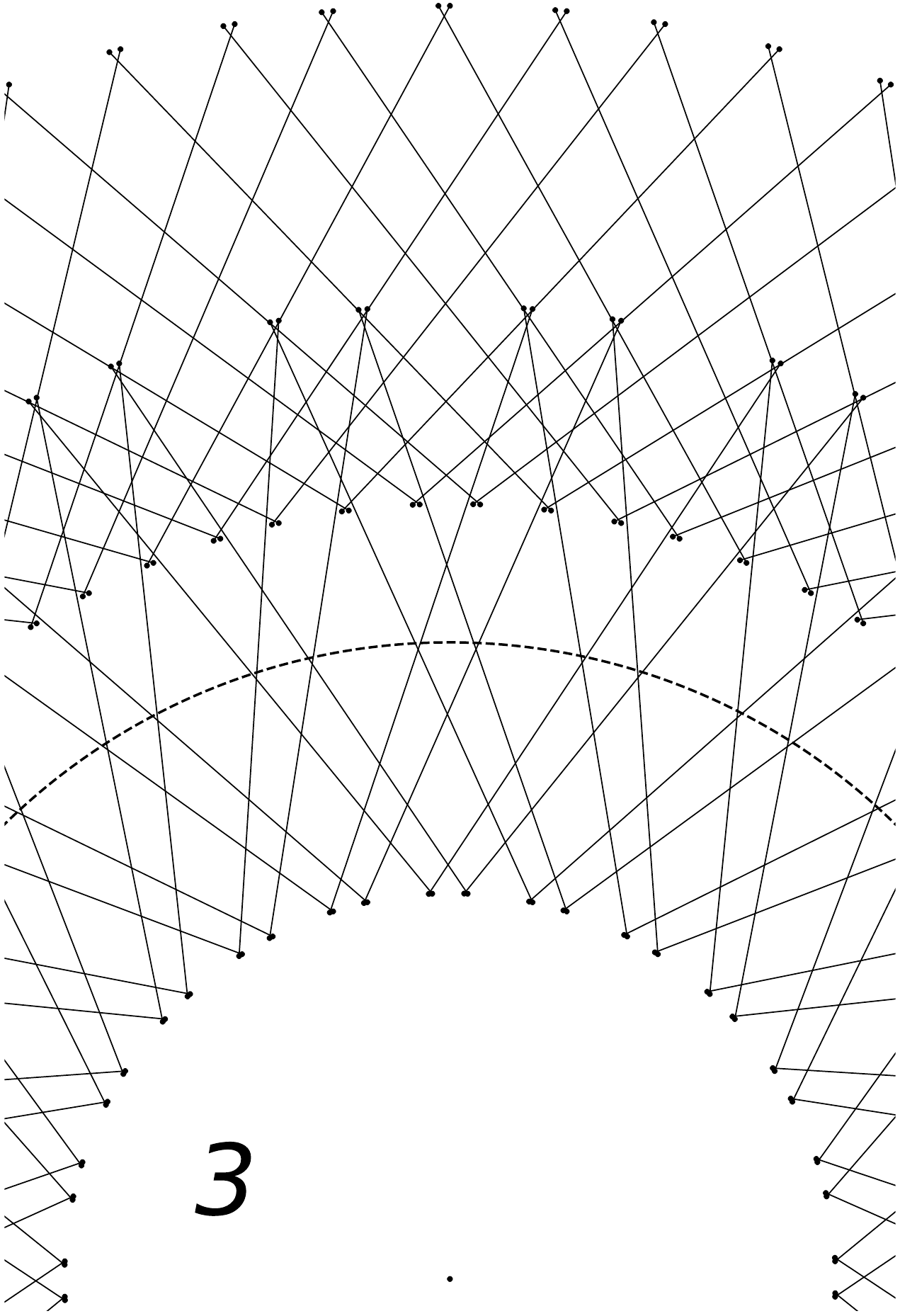} & \includegraphics[width=2.5cm]{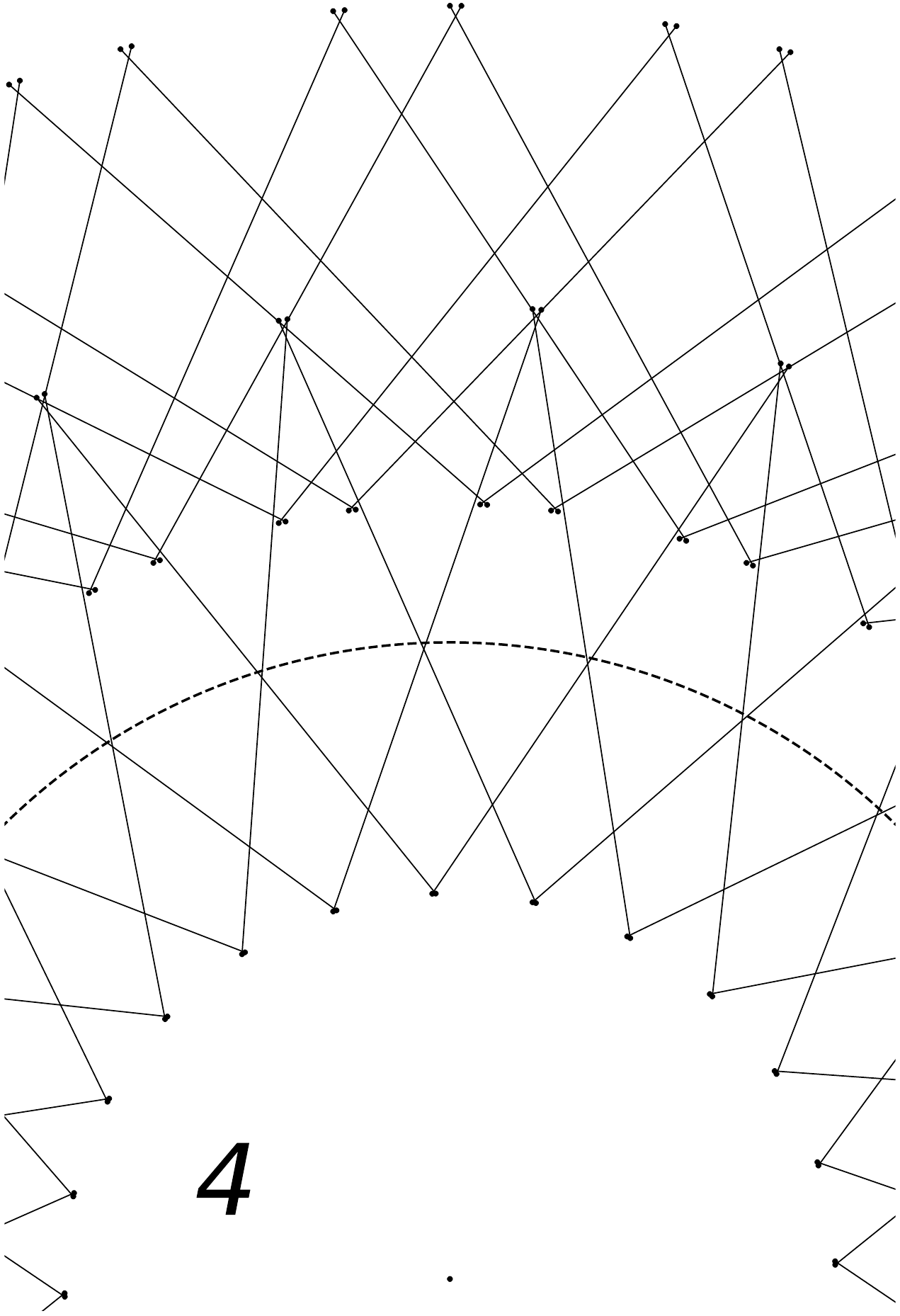} \\ \includegraphics[width=2.5cm]{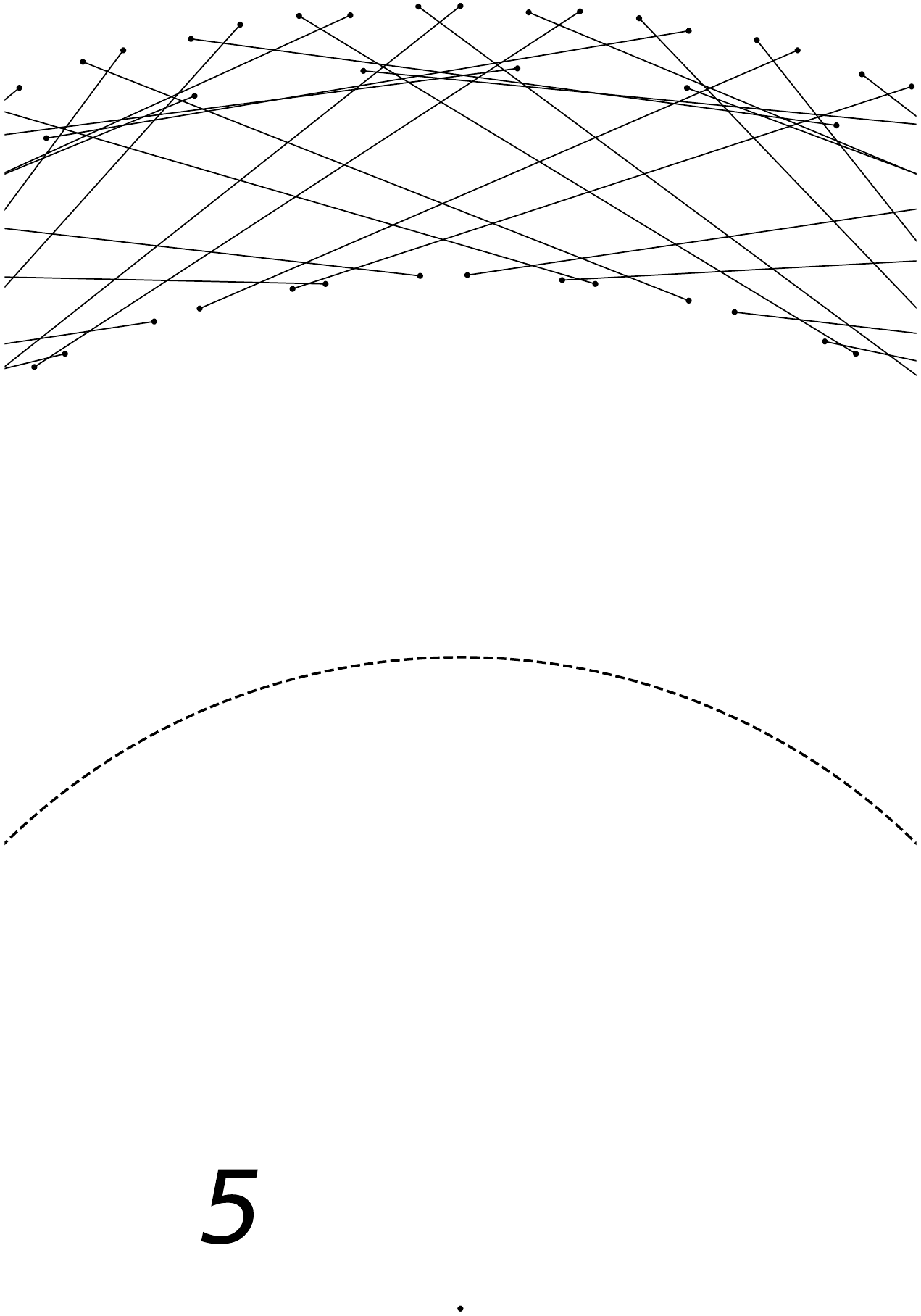}  &
 \includegraphics[width=2.5cm]{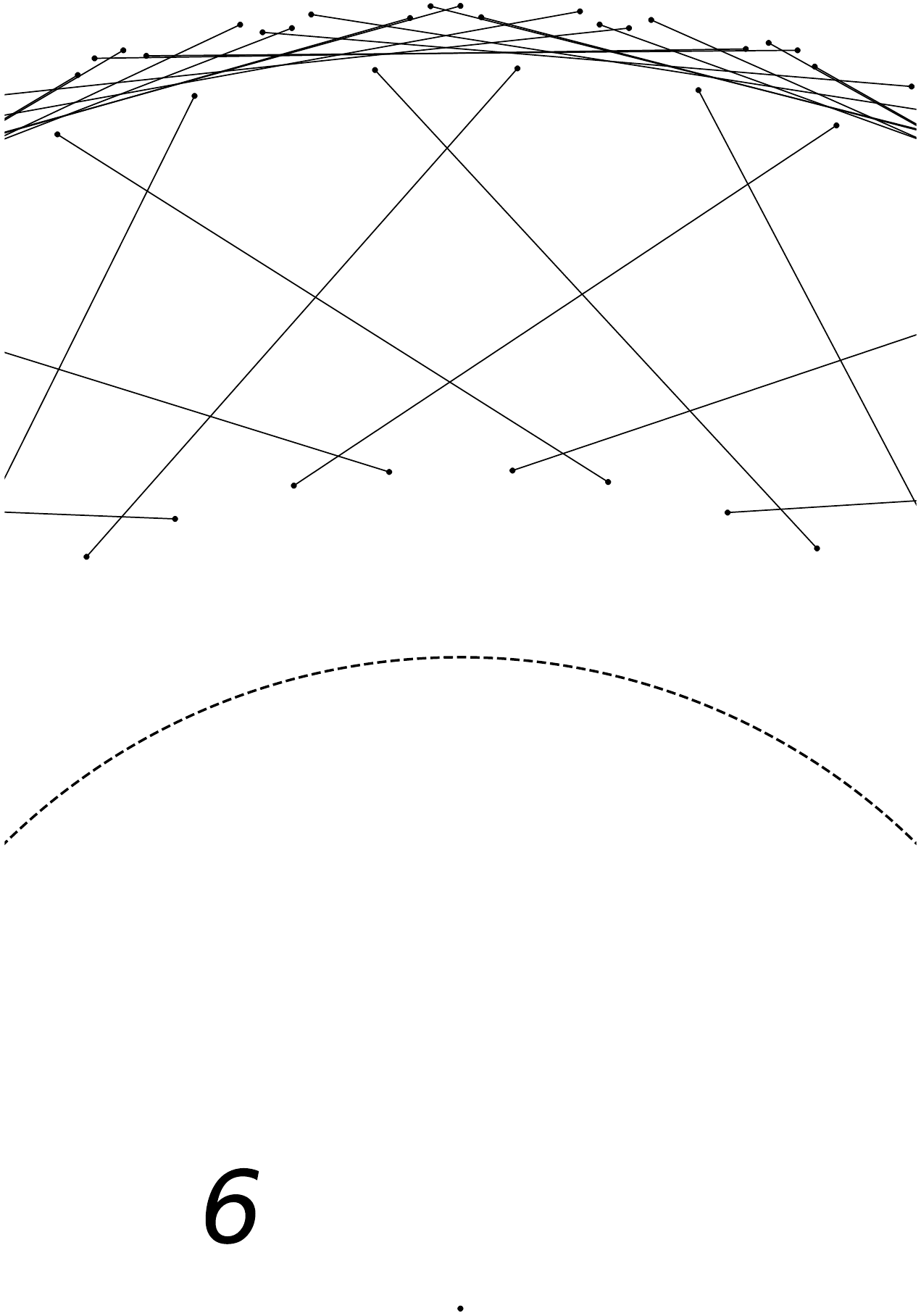} & \includegraphics[width=2.5cm]{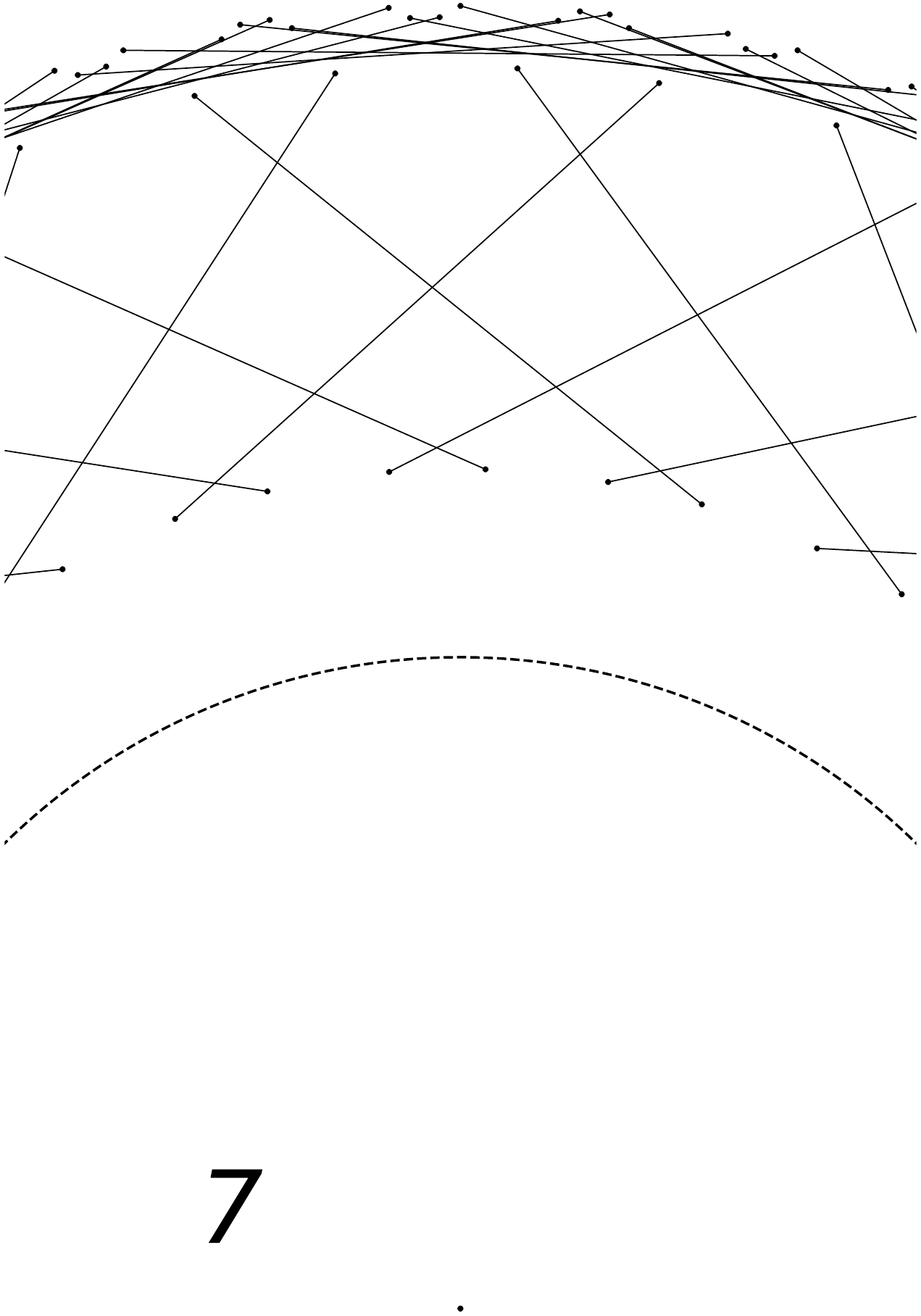} & \includegraphics[width=2.5cm]{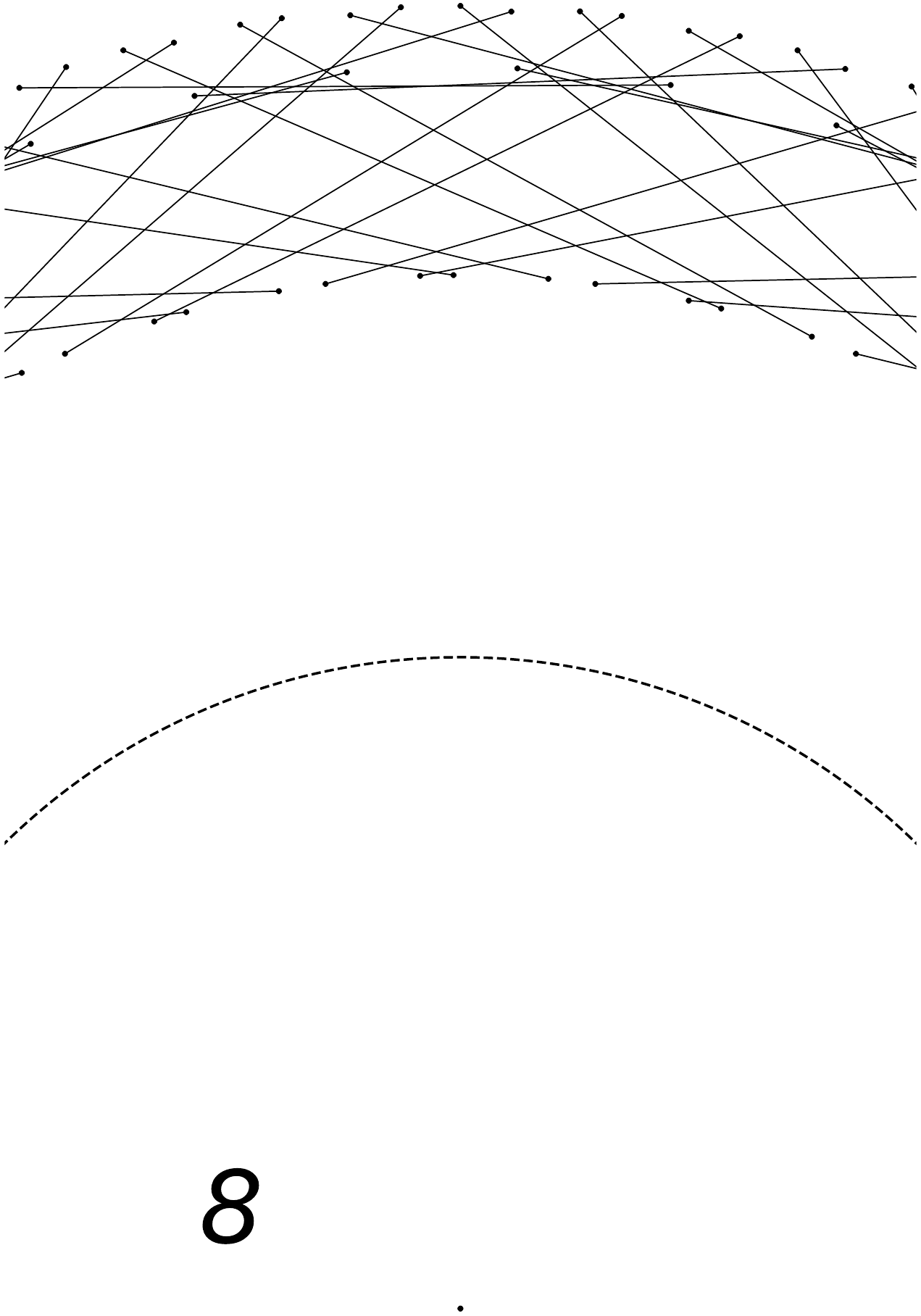} \\ \includegraphics[width=2.5cm]{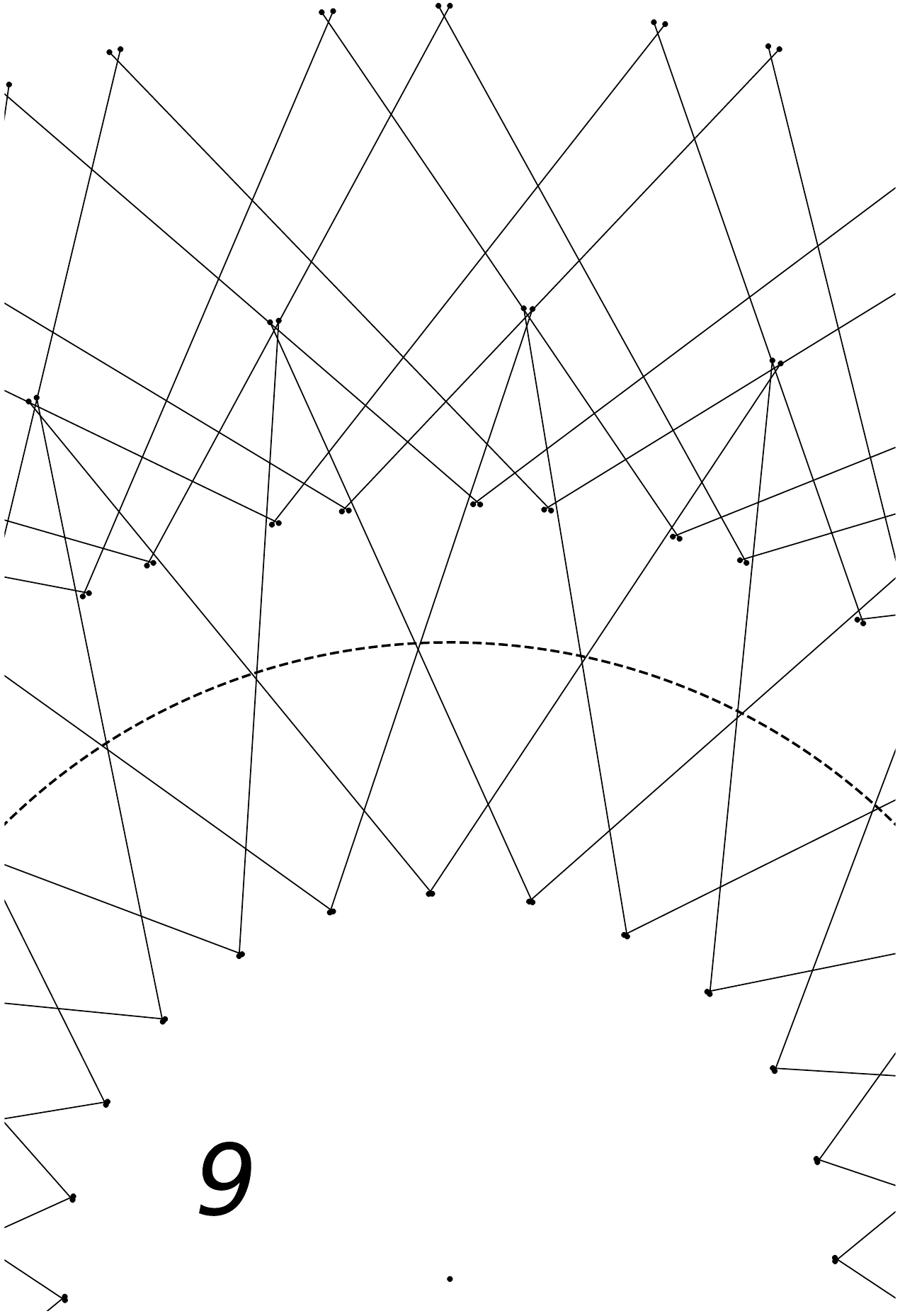} & \includegraphics[width=2.5cm]{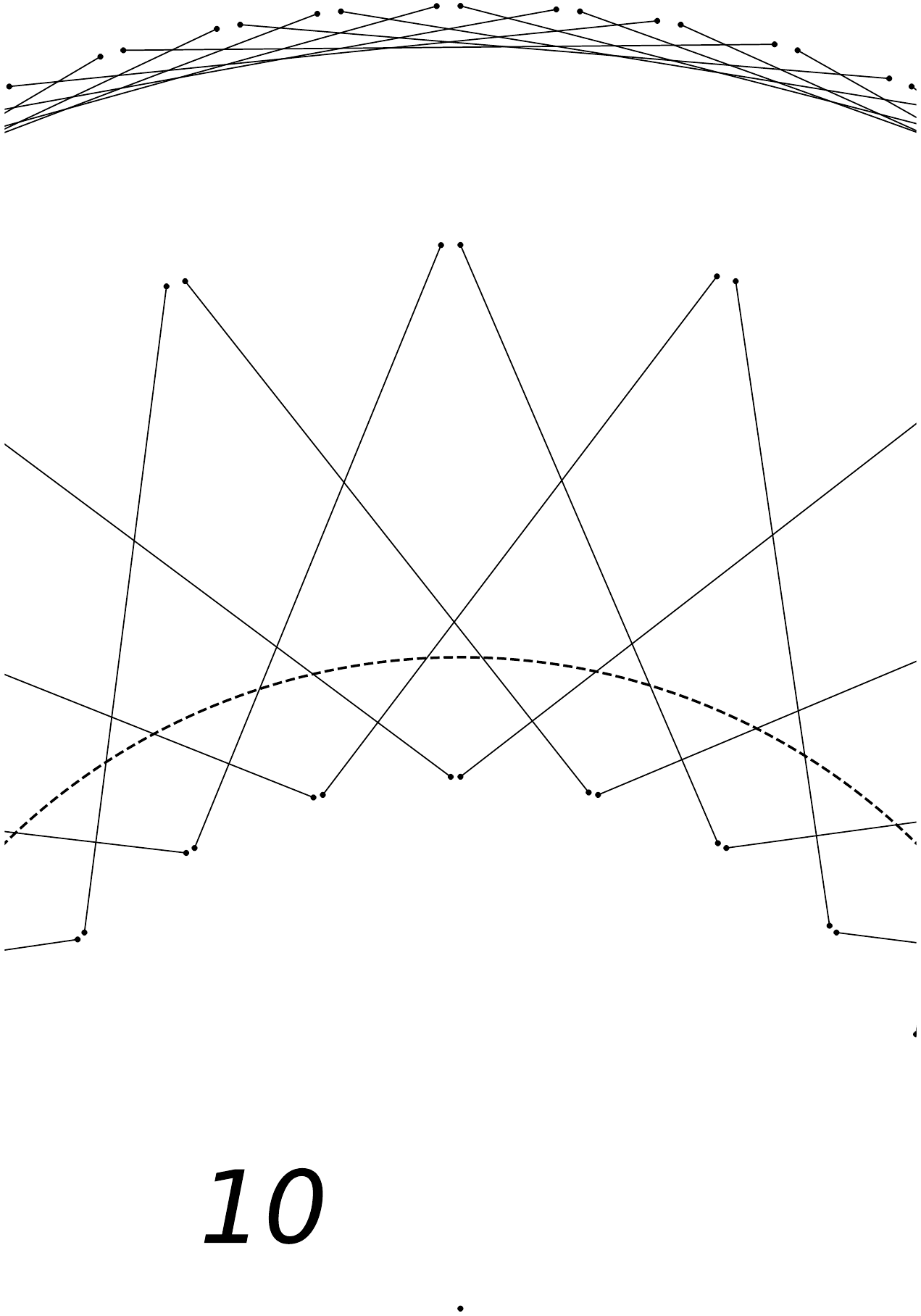} &
 \includegraphics[width=2.5cm]{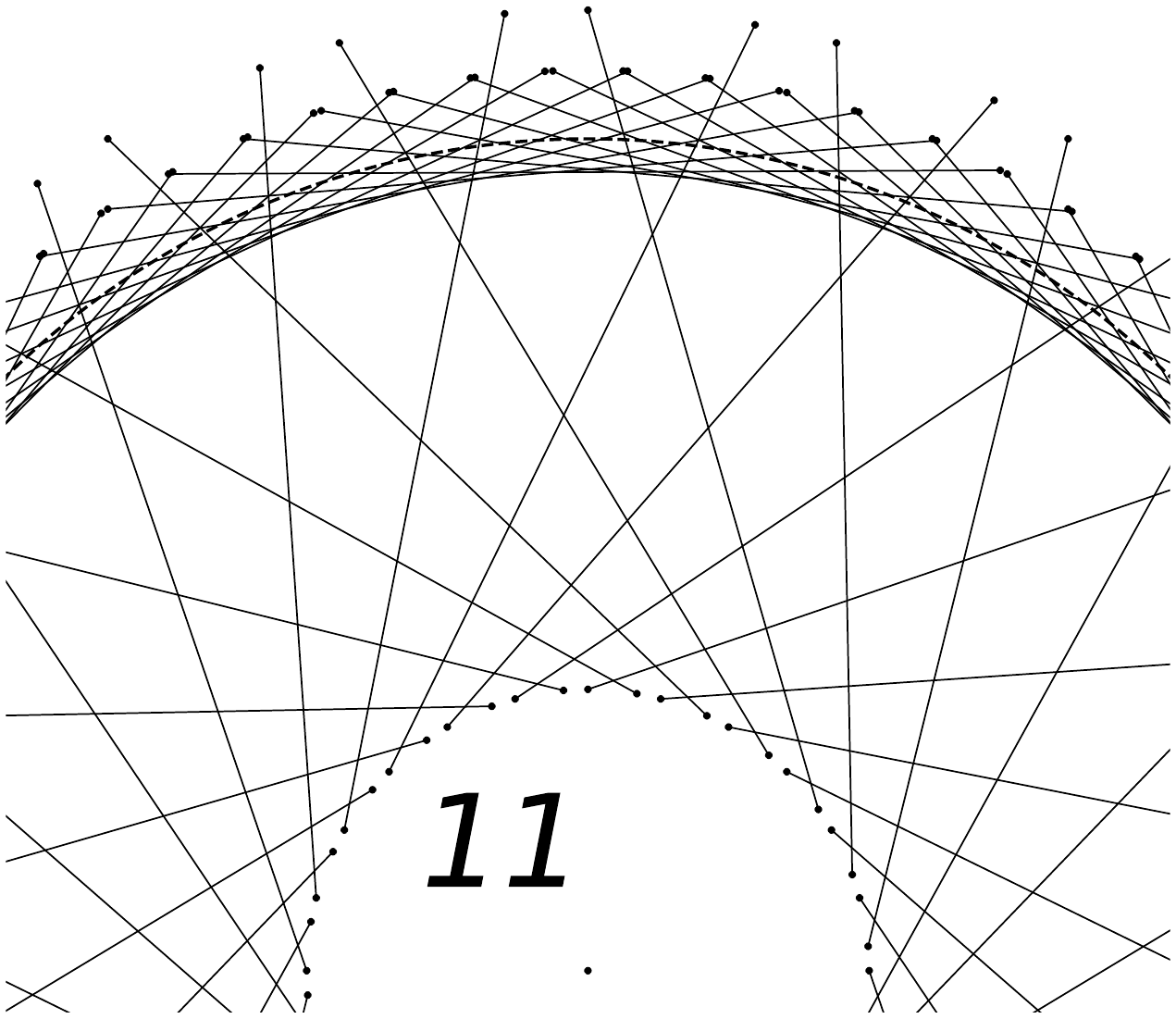} & \includegraphics[width=2.5cm]{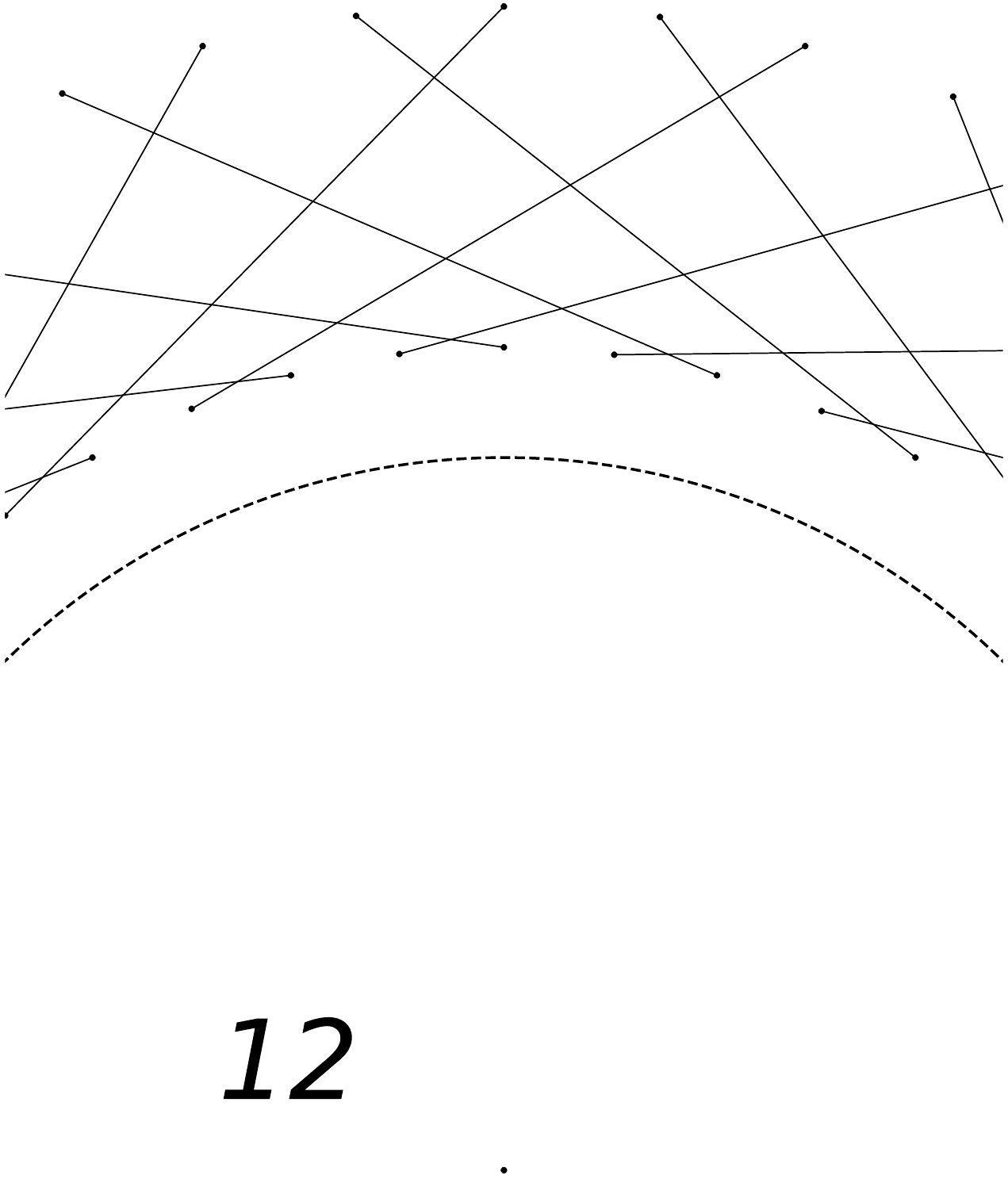} \\ \includegraphics[width=2.5cm]{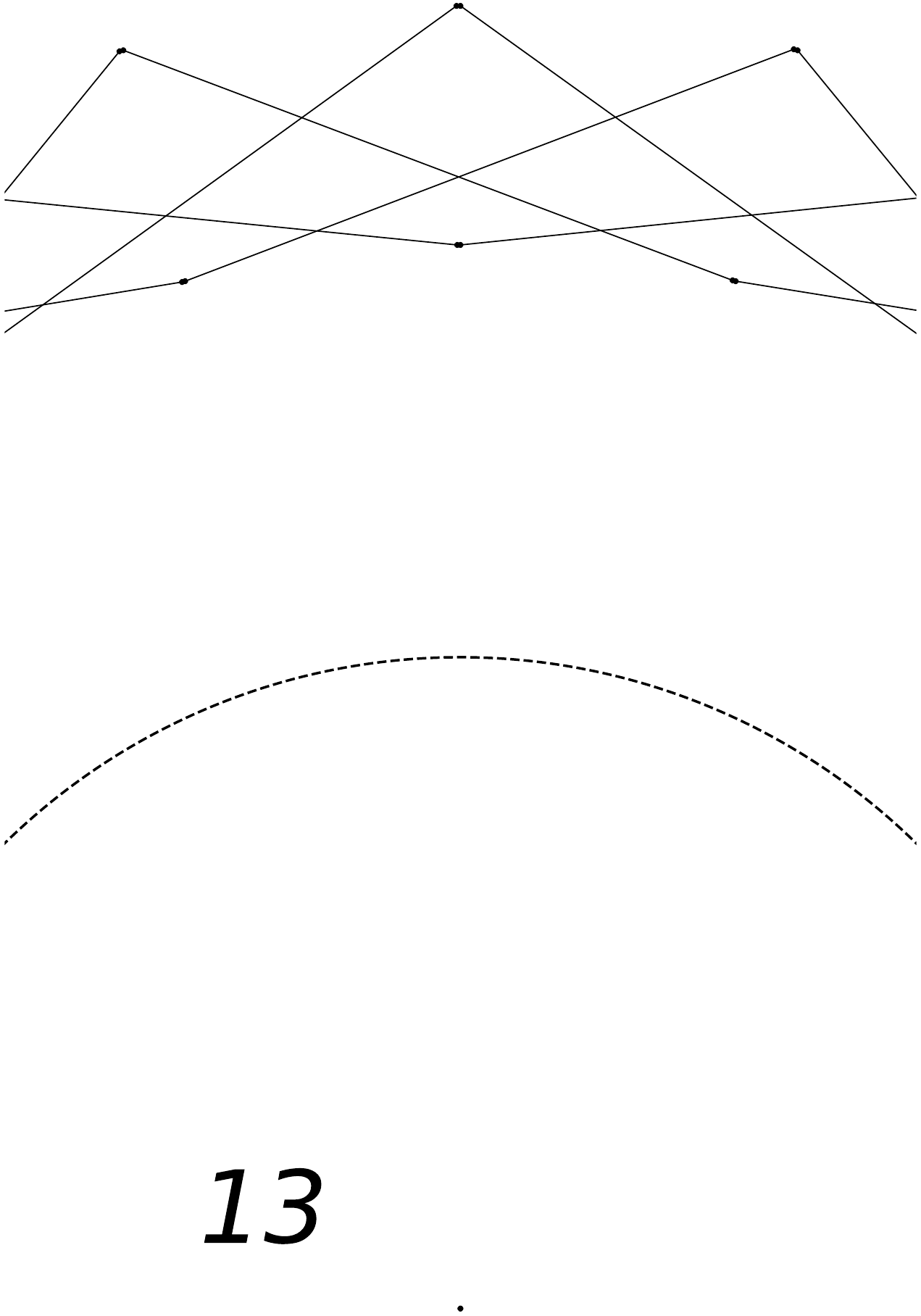} & \includegraphics[width=2.5cm]{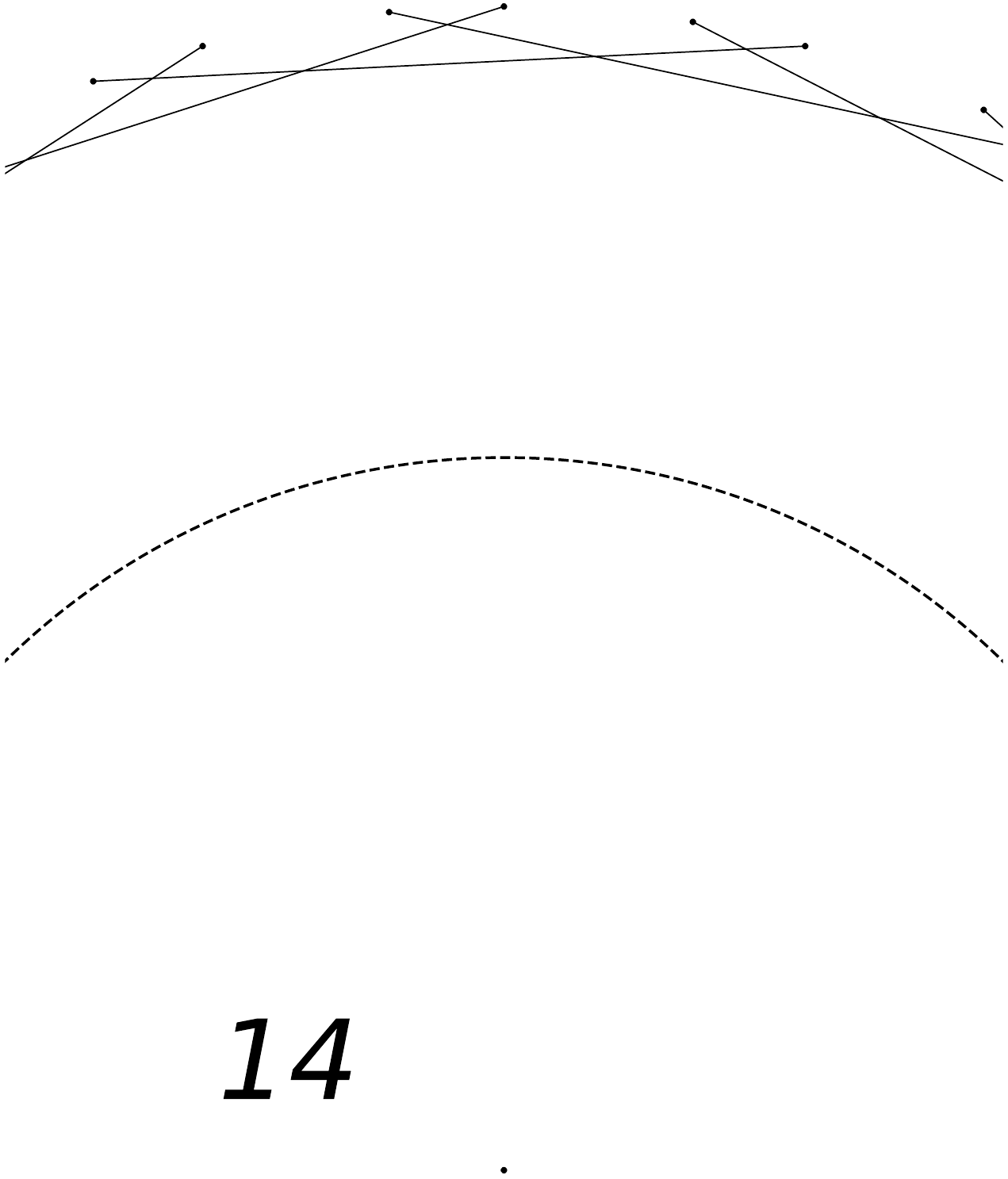} & & 
 \\
%\hline
\end{tabular}
\end{center}
\caption{New edges in 5-chromatic graphs, series 2}
\label{patterns2}
\end{figure}

\section{Performance of SAT solvers}

When checking the existence of coloring of the constructed graphs in 4 colors,  SAT solvers \textbf{Minisat}\cite{Minisat}, \textbf{Glucose 4} and the multithreaded version \textbf{Glucose-syrup} \cite{Glucose}, \textbf{Maple\-LCMDist\-Chrono\-BT-DLv3} (MapleLCMDv3) \cite{Zaikin}, \textbf{plingeling} \cite{Biere1}, \textbf{painless-mcomsps} \cite{painless}, and  \textbf{Kissat} \cite{Biere2} were used. All employed SAT solvers are based on the Conflict-Driven Clause Learning algorithm (CDCL \cite{CDCL}).

For the sake of completeness let us describe a Boolean satisfiability  (SAT) problem corresponding to the problem of finding a proper graph coloring with k colors. Let $x_{ic} \in \{0,1\}$ means ``vertex $u_i$ has color $c$''.  Then the conjunctive normal form (CNF) of the Boolean formula is as follows
\begin{equation}
\begin{split}
    f_G(X) =  \left(\bigwedge_{1\leq i \leq n} \bigvee_{1\leq c\leq k} x_{ic} \right)  
    \wedge \left(\bigwedge_{(u_i,u_j) \in E(G),\; 1 \leq c \leq k} \overline{ x_{ic}} \vee \overline{ x_{jc}} \right). 
\end{split}   
\label{cnf}
\end{equation}

In all problems of this type it is useful to assume that at some maximal clique the vertex colors are already defined (``breaking symmetry'').

It turns out that in the vast majority of cases the SAT problem is quite simple, which allows us to perform such a large scale enumeration.  The Table~\ref{sat_perf} shows the runtimes of the problems described in the previous section. 
\begin{enumerate}
    \item Test 1: 5 graphs, 3877 vertices,  no 4-coloring.
    \item Test 2: 100 graphs, 64513 vertices, 4-coloring exists.
    \item Test 3: 5-chromatic graphs 1-11 (series 2), 64513 vertices.
\end{enumerate}
Problems for graphs with 3877 vertices (test 1) do not present any difficulties for SAT solvers. As for the second and third tests, the conclusions are quite simple. Namely, the \textbf{Glucose 4} solver is the most efficient among both single-threaded solvers and multithreaded ones. Of course, the standings can change if we adjust the solvers' settings or choose a different encoding.

In the experiments we used a PC with a  Core i9-7940X CPU (3.8--4.3 GHz, 14 cores, 28 threads), 64 Gb RAM, running Ubuntu 20.04. The multithreaded solvers were run with 24 threads.

\begin{table}
    \centering
    \begin{tabular}{|c|c|c|c|}
    \hline
    SAT solver & 3877, UNSAT & 64513, SAT & 64513, UNSAT  \\
    \hline
    Minisat   & 6.6 (2.9--12)  & 180 (5.4--1500) & 5200 (2800--8400)  \\
    \hline
    Glucose 4  & 4.8 (3.9--5.7) & 74 (4.8--1200) & 1400 (1100--1700) \\
    \hline
    MapleLCMDv3 & 4.3 (3.4--6.4) & 120 (24--650) & 2400 (1900--2900) \\
    \hline
    Kissat   & 6.4 (4.1--8.1) & 110 (0.53--1200) & 4500 (3200--6600) \\
    \hline
    \hline
    Glucose--syrup (24t) & 3.3 (3.0--3.6)  & 30 (10--260) & 640 (500--820)  \\
    \hline
    plingeling (24t) & 2.5 (1.8--3.8) & 48 (3--1000) & 3300 (1800--4300) \\
    \hline
    painless (24t) & 6.0 (5.2-6.3) & 56 (30--490) & 660 (530--920) \\
    \hline
    \end{tabular}
    \caption{Elapsed time, avg (min--max), seconds.}
    \label{sat_perf}
\end{table}

\section{Conclusion}

The results given in the paper show that 5-chromatic unit-distance graphs embedded in surfaces of constant curvature can be obtained, at least in some cases, by straightforward algorithms. Except for the latest SAT solvers, the software we used for the search was  weakly optimized and was at the level of prototypes. It is possible that optimization of algorithms and program code, as well as the use of large computational power may allow us to find many new examples of 5-chromatic graphs or even a 6-chromatic unit distance graph embedded in some  surface of constant curvature. 

So far we have not been able to check the existence of a 5-coloring for some subgraphs of $H_{12,1} \circ H_{9,1} \circ H_{9,1}$ with the number of vertices from 29112 to 54072.  Problems of this kind seem to be a way harder computationally than graph coloring problems mentioned above. Namely, when fixing some 5-coloring of subgraph $H_{12,1}$, the \textbf{Kissat} solver can't produce a result in ~1 CPU month. This can mean nothing but high complexity of the problem, which should not be surprising given such number of vertices. On the other hand, it is possible that using a slightly different geometrical construction will allow us to find a graph for which the absence of 5-coloring is easy to check.

Something similar can be said in the case of the plane. The problem of coloring 64513-vertex graph (section 6) with 5 colors is surprisingly difficult. But if we assume that the origin has color 5 and other vertices are colored with colors 1, 2, 3, 4, then the problem becomes easy. Apparently, the problem of finding a proper 5-coloring of such a graph constructed for $s>3$ may be practically unsolvable using pure SAT approach, but it hardly means anything when the number of vertices is so large.

It is noteworthy that the constructed 5-chromatic graphs can be realized only at a single value of surface curvature.  It is well known that a unit distance graph embedded in a sphere of continuously varying radius can have arbitrarily large mean degree  \cite{EHP89, Swanepoel}, but the known example  is a bipartite graph and it obviously cannot serve as a basis for construction of a 5-chromatic graph which can be embedded into a surface of continuously varying curvature. Therefore it appears to be true that the computational methods considered in this paper can at best be applied only to construct a finite family of 5-chromatic graphs at a finite set of curvature values.

%\section*{Declaration of competing interest}
%The authors declare that they have no known competing financial interests or personal relationships that could have appeared to influence the work reported in this paper.

\section*{Acknowledgments}

The authors would like to thank  A.~M. Raigorodskii for the source of motivation, O.~S. Zaikin and S.~E. Kochemazov for the tips on working with  SAT solvers, and the Regional center for supporting gifted children ``Polaris-Adygea'' for providing computing resources. The authors also thank the anonymous referees for helpful comments, Aubrey de Grey and Jaan Parts for pointing out some inaccuracies, and Marijn Heule for discussing the results for the case of the plane.

 \begin{table}[ht]
    \centering
    \begin{tabular}{|c|c|p{9cm}|}
    \hline
    & Value & Minimal polynomial for $2 x_i$, $2 y_i$,  $2 z_i$ \\
    \hline
    $x_{1}$ & 0.46246   & $-49 - 147 x - 71 x^2 + 67 x^3 + 129 x^4 + 97 x^5 + 42 x^6 + 10 x^7 + x^8$ \\
    $y_{1}$ & -0.32804   &$ -19 + 57 x + 206 x^2 + 117 x^3 - 11 x^4 - 23 x^5 + x^6 + 2 x^7 + x^8$ \\
    $z_{1}$ &-0.15496    & $25+150 x+315 x^2+370 x^3+284 x^4+ 145 x^5 + 49 x^6 + 10 x^7 + x^8$\\
    \hline
    $x_{2}$ &-0.22176    &$-49-147 x- 71 x^2 + 67 x^3 + 129 x^4 + 97 x^5 + 42 x^6 + 10 x^7 + x^8$ \\
    $y_{2}$ &0.09483    &$-19 + 57 x + 206 x^2 + 117 x^3 - 11 x^4 - 23 x^5 + x^6 + 2 x^7 + x^8$ \\
    $z_{2}$ &0.53602    & $25-150 x+315 x^2-370 x^3+284 x^4- 145 x^5 + 49 x^6 -10 x^7 + x^8$\\
    \hline
    $x_{3}$ &-0.57219    &$1 + x - 16 x^2 - 21 x^3 + 34 x^4 + 156 x^5 + 234 x^6 +159 x^7 + 41 x^8$ \\
    $y_{3}$ &-0.04462    &$-1 - 15 x - 55 x^2 - 121 x^3 + 269 x^4 + 690 x^5 + 616 x^6 + 257 x^7 + 41 x^8$ \\
    $z_{3}$ &-0.12687    &$-41 - 211 x - 231 x^2 - 148 x^3 - 13 x^4 + 75 x^5 + 223 x^6 + 174 x^7 + 41 x^8$ \\
    \hline
    $x_{4}$ &-0.13592    &$1 + x - 16 x^2 - 21 x^3 + 34 x^4 + 156 x^5 + 234 x^6 +159 x^7 + 41 x^8$ \\
    $y_{4}$ &-0.22502    &$-1+15 x-55 x^2+121 x^3+269 x^4-690 x^5+ 616 x^6 - 257 x^7 + 41 x^8$ \\
    $z_{4}$ &-0.52572    &$-41+211 x-231 x^2+148 x^3-13 x^4-75 x^5+223 x^6 - 174 x^7 + 41 x^8$ \\
    \hline
    $x_{5}$ &0.07219    &$-1 + 5 x + 10 x^2 + 8 x^3 + 69 x^4 + 205 x^5 + 269 x^6 + 169 x^7 + 41 x^8$  \\
    $y_{5}$ &0.35363    &$1 - 13 x + 61 x^2 - 138 x^3 + 164 x^4 - 118 x^5 + 96 x^6 - 93 x^7 + 41 x^8$ \\
    $z_{5}$ &0.46392    &$269 - 232 x - 669 x^2 + 471 x^3 + 642 x^4 - 335 x^5 - 258 x^6 + 72 x^7 + 41 x^8$
    \\
    \hline
    $x_{6}$ &-0.36408    &$-1 + 5 x + 10 x^2 + 8 x^3 + 69 x^4 + 205 x^5 + 269 x^6 + 169 x^7 + 41 x^8$ \\
    $y_{6}$ &0.08400    &$1 - 13 x + 61 x^2 - 138 x^3 + 164 x^4 - 118 x^5 + 96 x^6 - 93 x^7 + 41 x^8$ \\
    $z_{6}$ &-0.45374    &$269 + 232 x - 669 x^2 - 471 x^3 + 642 x^4 + 335 x^5 - 258 x^6 - 72 x^7 + 41 x^8$ \\
    \hline
    $x_{7}$ &-0.58345    &$-1 + 4 x - 2 x^2 - 12 x^3 - x^4 + 15 x^5 + 15 x^6 + 6 x^7 + x^8$ \\
    $y_{7}$ &0.05157    &$-1 + 10 x - 5 x^2 + 18 x^3 + 19 x^4 - 15 x^5 - x^6 - x^7 + x^8$ \\
    $z_{7}$ &0.0492    &$1 - 14 x + 36 x^2 + 29 x^3 + 13 x^4 + 17 x^5 + 19 x^6 + 7 x^7 + x^8$ \\
    \hline
    $x_{8}$ &-0.4376    &$-1 - x - 2 x^2 + 4 x^3 + 14 x^4 + 15 x^5 + 13 x^6 + 6 x^7 + x^8$ \\
    $y_{8}$ & 0.03856    &$1 - 15 x + 28 x^2 - 21 x^3 - x^4 + 11 x^5 + x^6 - 4 x^7 + x^8$ \\
    $z_{8}$ & 0.39052    & $-41 - 37 x + 108 x^2 + 45 x^3 - 42 x^4 - 11 x^5 + 4 x^6 + 2 x^7 + x^8 $\\
    \hline
    \end{tabular}
    \caption{Minimal polynomials for vertex coordinates  of $G_{972}$}
    \label{coord2}
\end{table}

\newpage

\bibliographystyle{abbrv}
\bibliography{main.bib}

\end{document}